\DeclareMathOperator{\Spec}{Spec}
\DeclareMathOperator{\Proj}{Proj}
\DeclareMathOperator{\Hilb}{Hilb}
\DeclareMathOperator{\Prob}{Prob}
\DeclareMathOperator{\codim}{codim}
\DeclareMathOperator{\sing}{sing}
\DeclareMathOperator{\red}{red}
\DeclareMathOperator{\ch}{char}
\def\A{\mathbb{A}}
\def\Z{\mathbb{Z}}
\def\Gr{\mathbb{G}}
\def\Q{\mathbb{Q}}
\def\P{\mathbb{P}}
\def\e{\varepsilon}
\def\onto{\twoheadrightarrow}
\def\into{\hookrightarrow}
\def\xto{\xrightarrow}
\def\FF{\mathbb{F}}
\def\Z{\mathbb{Z}}
\def\d{\partial}
\theoremstyle{plain}
\newtheorem{theorem}{Theorem}[section]
\newtheorem{lemma}[theorem]{Lemma}
\newtheorem{corollary}[theorem]{Corollary}
\newtheorem{proposition}[theorem]{Proposition}
\theoremstyle{definition}
\newtheorem{conjecture}[theorem]{Conjecture}
\theoremstyle{remark}
\newtheorem{remark}[theorem]{Remark}
\begin{document}

\title{The moduli space of hypersurfaces whose singular locus has high dimension\footnote{The final publication in Mathematische Zeitschrift is available at Springer via http://dx.doi.org/[10.1007/s00209-014-1360-0]}}
\author{Kaloyan Slavov}

\maketitle

{\it Classification codes:} 51N35, 14N05, 14G15 (primary), 14Q10, 14C05, 14G05 (secondary).

\begin{abstract}
Let $k$ be an algebraically closed field and let $b$ and $n$ be integers with $n\geq 3$ and $1\leq b \leq n-1.$ Consider the moduli space $X$ of
hypersurfaces in $\mathbb{P}^n_k$ of fixed degree $l$ whose singular locus is at least $b$-dimensional. We prove that for large $l$, $X$ has a unique
irreducible component of maximal dimension, consisting of the hypersurfaces singular along a linear $b$-dimensional subspace of $\mathbb{P}^n$. The proof
will involve a probabilistic counting argument over finite fields.
\end{abstract}

\pagestyle{plain}

\section{Introduction}

Let $n$ and $b$ be fixed integers with $n\geq 3$ and $1\leq b\leq n-1$,
and let $k$ be an algebraically closed field of characteristic $p\geq 0$.
Fix a positive integer $l$.
Inside the projective space of all
hypersurfaces in $\P^n$ of degree $l$, consider the ones which are singular along some $b$-dimensional closed subscheme,
\[X=\{[F]\in\P(k[x_0,...,x_n]_l)\mid\dim V(F)_{\sing}\geq b\}\]
(this is a closed subset).

A simple argument (Lemma \ref{X^1exists}) will show that
\[X^1:=\{[F]\in X\mid L\subset V(F)_{\sing}\ \text{for some linear}\ b\text{-dimensional}\ L\subset\P^n\}\]
is an irreducible closed subset of $X$ of dimension $\binom{l+n}{n}-a_{n,b}(l),$ where
\begin{align*}
a_{n,b}(l)
&:=\binom{l+b}{b}+(n-b)\binom{l-1+b}{b}+1-(b+1)(n-b)\\
&=\frac{n-b+1}{b!}l^b+\dots.
\end{align*}

\begin{theorem}
There exists an effectively computable integer $l_0=l_0(n,b,p),$ such that for all $l\geq l_0,$ $X^1$ is the unique irreducible component of $X$
of maximal dimension.
\label{mainthm}
\end{theorem}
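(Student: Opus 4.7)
The plan is to show that every irreducible component $Z\subset X$ with $Z\neq X^1$ has $\dim Z<\dim X^1$, by stratifying $X$ according to the ``type'' of a $b$-dimensional component of $V(F)_{\sing}$ and bounding each stratum via a point-count over a finite field. First I will reduce to $k=\overline{\FF_q}$ by spreading the moduli spaces over $\Spec\Z$; then, by Lang--Weil (so $\dim V = \limsup_{m\to\infty}\log_{q^m}|V(\FF_{q^m})|$), the theorem becomes equivalent to the asymptotic bound
\[|(X\setminus X^1)(\FF_{q^m})| = o\bigl(q^{m\dim X^1}\bigr)\quad\text{as }m\to\infty,\]
which is the probabilistic counting hinted at in the abstract.

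Next I stratify. If $F\in X\setminus X^1$, pick an irreducible $b$-dimensional component $Y$ of $V(F)_{\sing}$; since $F\notin X^1$, $Y$ is not a linear subspace, so $d:=\deg Y\geq 2$. Let $X_d$ be the image in $|\O(l)|$ of the incidence
\[\I_d=\{(F,Y) : Y\subset V(F)_{\sing},\ \dim Y=b,\ \deg Y=d\},\]
which sits over the Chow variety $C_d$ of degree-$d$, $b$-dimensional subvarieties of $\P^n$. Then $X\setminus X^1\subseteq\bigcup_{d\geq 2}X_d$. The fiber $V_Y$ above a point $Y\in C_d$ is the linear subspace of degree-$l$ forms in the symbolic square $I_Y^{(2)}$, and the conormal-bundle description of the double structure $Y^{(2)}$ should yield
\[\codim V_Y\geq (n-b+1)\frac{d\,l^b}{b!}+O(l^{b-1}),\]
which matches the leading part of $a_{n,b}(l)$ when $d=1$ and strictly beats it when $d\geq 2$.

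Combining these with a polynomial bound $N(d)$ for $\dim C_d$,
\[\frac{|(X\setminus X^1)(\FF_{q^m})|}{q^{m\dim X^1}}\leq\sum_{d\geq 2}q^{m(N(d)-(n-b+1)(d-1)l^b/b!+O(l^{b-1}))}.\]
The hard part will be uniformity in $d$: $N(d)$ grows with $d$, and $l_0$ must also depend on $p=\ch k$ because of Euler-relation issues in positive characteristic, so $l_0$ must be chosen large enough that the linear-in-$d$ gain $(n-b+1)(d-1)l^b/b!$ dominates $N(d)$ for every $d\geq 2$ at once. It is precisely this summation-over-all-$d$ viewpoint that makes the probabilistic counting essential: it converts the uniformity into the convergence of a single geometric-type series for $l\geq l_0$ large, from which the theorem follows upon letting $m\to\infty$.
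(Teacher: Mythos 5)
Your overall architecture (reduce to finite fields, stratify by the degree $d$ of an integral $b$-dimensional piece of $V(F)_{\sing}$, and beat $\dim X^1$ on each stratum) matches the paper's, and your incidence-variety count is essentially how the companion paper handles \emph{small} $d$. But there is a genuine gap exactly where you flag "the hard part": for large $d$ your mechanism cannot work, and the paper replaces it with a completely different argument. Two concrete failures. First, your fiber bound $\codim V_Y\geq (n-b+1)\,d\,l^b/b!+O(l^{b-1})$ cannot grow linearly in $d$ without bound: the codimension is capped by $\dim k[x_0,\dots,x_n]_l\sim l^n/n!$, while $d$ ranges up to $l(l-1)^{n+1}$. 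The true bound (Lemma \ref{cont_C_lrg_deg}, proved by specializing $Y$ to a union of $d$ linear spaces) is $A_b(l,m)=\sum_{e=1}^m\binom{l-e+1+b}{b}$ with $m=\min(d,l+1)$, i.e.\ it saturates at roughly $l^{b+1}$ once $d>l$. Second, the base of your incidence variety is too big: the Chow/Hilbert-type parameter space of degree-$d$, $b$-dimensional subvarieties has dimension growing like $d^{b+1}$ (already for hypersurfaces in a $\P^{b+1}$), so $N(d)$ eventually dominates any gain that is at most linear in $d$; no choice of $l_0$ makes the series converge over the full range $2\leq d\leq l(l-1)^{n+1}$.

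The paper's resolution for $d\geq\frac{l+1}{2}$ avoids parametrizing the subvarieties $Y$ altogether. Following Poonen's Bertini method, one writes $F=F_0+\sum G_i^px_i$ over $\FF_q$, so that $\partial F/\partial x_i=\partial F_0/\partial x_i+G_i^p$, and shows that with probability $1-O(q^{-a+1})$ the successive loci $V(\partial F/\partial x_0,\dots,\partial F/\partial x_i)$ have the expected dimension; B\'ezout then bounds the number of their components by $(l-1)^{n-b}$ (independently of $d$), and the saturated bound $A_b(\tau,m')$ controls the probability that any one degree-$d$ component survives into the singular locus. An induction on $d$ (Proposition \ref{lrg_m} only controls components not already in some $T^{d'}_k$ with $d'<d$) completes the characteristic-$p$ case. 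You would also need two further ingredients you do not mention: the small-$d$ case rests on the Eisenbud--Harris conjecture when $b\geq 2$, and the passage to characteristic $0$ needs not just semicontinuity for $T^d\to\Spec\Z$ but a separate verification that $X^1\nsubseteq T^d_{\overline{\FF_p}}$ (for a suitable $p$) to upgrade $\dim T^d_{\overline{\Q}}\leq\dim X^1$ to the strict inequality required for uniqueness.
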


Our approach does not work when $l$ is small; however, we expect that the same conclusion will hold in fact for all $l\geq 2.$

In addition, $X$ has an irreducible component $X^2$ induced by the hypersurfaces singular along a $b$-dimensional quadric
(see Section \ref{seclrgcompsubsectioneopf} for the precise description of $X^2$).

\begin{theorem} Suppose that $\ch k=p>0.$
There exists an effectively computable
$l_0=l_0(n,b,p)$ such that for all $l\geq l_0,$ $X^2$ is the unique irreducible component of $X$ of second largest
dimension.
\label{corscomp}
\end{theorem}

We now sketch the main idea of the proof of Theorem \ref{mainthm}. Let $\Hilb^d$ denote the disjoint union of the finitely many Hilbert schemes
$\Hilb^{P_\alpha}_{\P^n},$ where $P_\alpha$ ranges over the Hilbert polynomials of integral $b$-dimensional
closed subschemes $C\subset\P^n$
of degree $d$, and define the restricted Hilbert scheme $\widetilde{\Hilb}^d$ as
the closure in $\Hilb^d$ of the set of points corresponding to integral subschemes. Let
$V=k[x_0,...,x_n]_l$. Consider the incidence correspondence
\[\widetilde{\Omega}^d=\{(C,[F])\in\widetilde{\Hilb}^d\times\P(V)\mid C\subset V(F)_{\sing}\}.\]
The first step\footnote{We are going to be slightly imprecise here; see Section \ref{smldegsubsection} for the exact statement.}
is to show that in the case of ``small" degree $2\leq d\leq B$ (for an appropriately chosen $B$),
any irreducible component of $\widetilde{\Omega}^d$ has dimension less than $\dim X^1.$ For this, we apply the theorem on dimension
of fibers to the map $\pi\colon \widetilde{\Omega}^d\to\widetilde{\Hilb}^d$. A result of Eisenbud and Harris gives
$\dim\widetilde{\Hilb}^d$ when $b=1$;
in fact, a naive rude bound valid for all $b$ is sufficient for our purposes. So it remains to give
an upper bound for the dimension of the fiber of  $\pi$ over an integral $C$ of degree $d$. For this, we
specialize $C$ to a union of $d$ $b$-dimensional linear subspaces that contain a common $(b-1)$-dimensional linear
subspace.

The second step is to handle the case of ``large" degree $d\geq B+1$. For this, the first main observation is that it
suffices to assume that $k=\overline{\FF_p}$ in the statement of the main theorem. The reason is that
the variety $X$ is the basechange by $\Spec k\to\Spec\Z$ of a projective variety $X^{\text{univ}}\to\Spec\Z$, and in order to
give an upper bound for $\dim X^{\text{univ}}\times{\overline{\Q}},$ by upper-semicontinuity,
it suffices to give an upper bound for $\dim X^{\text{univ}}\times{\overline{\FF_p}}$
for a single prime $p$ (we will take $p=2$).

So let $k=\overline{\FF_p}$ and $d\geq B+1.$ We have to give an upper bound for the dimension of
\begin{multline*}
T^d=\{[F]\in\P(k[x_0,...,x_n]_l)\mid V(F)_{\sing}\ \text{contains}\\ \text{a subscheme with Hilbert polynomial among $\{P_\alpha\}$}\}.
\end{multline*}
Any variety $T$ over $\overline{\FF_p}$ comes from a variety $T_0$ defined over some finite field $\FF_{q_0}$; in order to
give an upper bound $\dim T\leq A$, it suffices to prove that $\#T_0(\FF_q)=O(q^A)$ as $q\to\infty,$ by the result of
Lang-Weil \cite{LW}. So we reduce the problem to giving an upper bound on the number of hypersurfaces
$F\in\FF_q[x_0,...,x_n]_l$ such that $V(F)_{\sing}$ contains an integral closed subscheme of large degree $d$.

For this, we mimic the main argument in \cite{P}. We sketch it here in the case $b=1$ and $l\equiv 1\pmod{p}$ to simplify notation.
Write $F$ in the form
\[F=F_0+\sum_{i=0}^n G_i^p x_i,\]
where $F_0$ has degree $l,$ each $G_i$ has degree $\tau=\frac{l-1}{p},$ and note that
\[\frac{\d F}{\d x_i}=\frac{\d F_0}{\d x_i}+G_i^p.\]
Fix $F_0$. We exhibit a large supply of $(G_0,...,G_n)$ such that the $F$ constructed in this way has the property that
$V(F)_{\sing}$ contains no integral curves of degree $d$. To do this, we first give a large supply of
 $(G_0,...,G_{n-2})$ such that
$V(\frac{\d F}{\d x_0},...,\frac{\d F}{\d x_{n-2}})$ has all components of dimension $1$.
The number of such components is bounded by B\'ezout's theorem.
It remains to give a large supply of $G_{n-1}$ such that no irreducible
component $C$ of $V(\frac{\d F}{\d x_0},...,\frac{\d F}{\d x_{n-2}})$
of degree $d$ is contained in $V(\frac{\d F_0}{\d x_0}+G_{n-1}^p)$.
We accomplish this by specializing $C$ to a union of $d$ lines again,
and giving an upper bound on the number of $G_{n-1}$ with $C\subset V(\frac{\d F_0}{\d x_0}+G_{n-1}^p)$.
With some technical details concerning the uniqueness of the
largest-dimensional component in characteristic $0$, this completes the proof of Theorem \ref{mainthm}. The discussion of the
second largest component is along the same lines.

\section{Notation}

For a field $k$, the graded ring $k[x_0,...,x_n]$ will be denoted by $S$.
For a graded $S$-module $M$ (in particular, for a homogeneous ideal),
$M_l$ will denote the $l$-th graded piece of $M$. When $I\subset S$ is a homogeneous ideal, $(I^2)_l$ is denoted simply by $I^2_l.$
Also, $k[x_0,...,x_n]_{\leq l}$ denotes the vector space of (inhomogeneous) polynomials whose total degree is at most $l$.
When the field $k$ and the integer $l$ are fixed, $V$ will denote the vector space $V=k[x_0,...,x_n]_l.$

For a finite-dimensional $k$-vector space $V$, $\P(V)$ denotes the projective space parametrizing lines in $V$.
 Given a homogeneous ideal $I\subset k[x_0,...,x_n],$ $V(I)$ denotes the closed subscheme
$\Proj (k[x_0,...,x_n]/I)\into\P^n_k$, and for $i=0,...,n$, $D_+(x_i)$ is the complement of $V(x_i).$
We often abbreviate $V(\{G_i\}_{i\in I})\subset\P^n$ as $V(G_i),$ when the index set $I$ is irrelevant or understood.

For $F\in S_l$, $V(F)_{\sing}\subset\P^n$ is the closed subscheme
$V(F,\frac{\d F}{\d x_i})=V(F,\frac{\d F}{\d x_0},...,\frac{\d F}{\d x_n})$ of $\P^n$, so when $F\neq 0,$ the underlying topological
space of $V(F)_{\sing}$ is the singular locus of $V(F)$.

We will reuse $l_0$ for different bounds as we go along, in order to avoid unnecessary notation; however,
it will be clear that we are actually referring to different values of $l_0$ even though we use the same symbol. Also, it will
be understood that sometimes the value of $l_0$ is the maximum of a finite set of previously defined bounds, each of them
still denoted by $l_0.$

For integers $b$ and $n$ with $1\leq b\leq n-1,$ we denote by $\Gr(b,n)$ the Grassmanian of $b$-dimensional projective linear
subspaces of $\P^n.$

\section{The incidence correspondence and the restricted Hilbert scheme}
\label{functorZ}

The first goal of this section is to introduce a universal incidence correspondence $\Omega^P$
over $\Spec\Z$ and the universal moduli spaces $T^P\to\Spec\Z$.
Secondly, we introduce the restricted Hilbert scheme and discuss an
upper bound for its dimension.

\subsection{The incidence correspondence}
\label{inc_corr_subsection}

Let $l\geq 1$ be an integer, and let $V=\Z[x_0,...,x_n]_l$ now. For an algebraically closed field $k$, set $V_k=V\otimes_{\Z}k=k[x_0,...,x_n]_l$.
Consider a polynomial $P\in\Q[z],$ and let $\Hilb^P_{\P^n_{\Z}}$ be the Hilbert scheme corresponding to $P$.

Using standard arguments, one can show that there exists a closed subscheme $\Omega^P$ of $\Hilb^P_{\P^n_{\Z}}\times\P(V)$ such that for any algebraically closed field $k$, the basechange
$\Omega^P_k=\Omega^P\times\Spec k$ is given by
\[\Omega^P_k=\left\{(C,[F])\in \Hilb_{\P^n_k}^P\times\P(V_k)\
|\ C\subset V\left(F,\frac{\partial F}{\partial x_i}\right)\right\}\]
(inclusion above denotes scheme-theoretic inclusion).

Let $T^P$ denote the scheme-theoretic image of $\Omega^P\to\P(V).$
For any algebraically closed field $k$, we obtain
a diagram
\[\xymatrix{
\Omega^P_k\ar@{->>}[d]\ar@{^{(}->}[r] & \Hilb^P_{\P^n_k}\times\P(V_k)\ar[d]\\
T^P_k\ar@{^{(}->}[r] &\P(V_k)
}
\]
and so
\[T^P_k=\{[F]\in\P(V_k)\mid V(F)_{\sing}\ \text{contains a subscheme with Hilbert polynomial}\ P\}.\]

Next, we recall (see \cite{B}, p.~3) the following classical result.

\begin{theorem}[Chow's finiteness theorem] Fix positive integers $n,b,d.$ There are only finitely many Hilbert polynomials $P_\alpha$ of
integral $b$-dimensional closed subschemes of $\P^n_k$ of degree $d$. The algebraically closed field $k$ varies as well in this
statement.
\end{theorem}

For an integer $d\geq 1,$
let $\Hilb^{b,d}_{\P^n}$ be the disjoint union of the Hilbert schemes
$\Hilb^{P_\alpha}_{\P^n}$ for all the finitely many possible Hilbert polynomials $P_{\alpha}$
of an integral $b$-dimensional closed subscheme $C\subset\P^n$ of degree $d$.
Let $\Omega^d$ be the disjoint union of the finitely many $\Omega^{P_{\alpha}}$.
Also, define $T^d$ as the scheme-theoretic image of $\Omega^d\to\P(\Z[x_0,...,x_n]_l).$ For any algebraically closed field $k$, the basechange $T^d_k$ is described by
\begin{multline*}T^d_k=\bigcup T^{P_\alpha}_k=\{[F]\in\P(V_k)\mid V(F)_{\sing}\ \text{contains}\\
 \text{a subscheme with Hilbert polynomial among $\{P_\alpha\}$}\}.
\end{multline*}
Any integral closed subscheme of degree $1$ is linear, so
$X^1=T^1_k$. We will use $X^1$ and $T^1_k$ interchangeably.

Over an algebraically closed field $k$, define
\[\hat{T}^d:=T^d_k-\left(T^d_k\cap \left(\bigcup_{d'=1}^{d-1} T^{d'}_k\right)\right).\]

\subsection{The restricted Hilbert scheme}
\label{gather_defs}

 Let $k$ be an algebraically closed field.
Define the restricted Hilbert scheme $\widetilde{\Hilb}^{b,d}_{\P^n}$ to be the Zariski
closure in $\Hilb^{b,d}_{\P^n}$ of the set of integral subschemes, with reduced subscheme structure.

Eisenbud and Harris \cite{EH} prove the following result for the dimension of
$\widetilde{\Hilb}^{b,d}_{\P^n}$ in the case $b=1$.

\begin{theorem}
Let $b=1$. For $d\geq 2,$ the largest irreducible component of $\widetilde{\Hilb}^{1,d}_{\P^n}$ is the one corresponding to the
family of plane curves of degree $d$; in particular,
$\dim\widetilde{\Hilb}^{1,d}_{\P^n}=3(n-2)+\frac{d(d+3)}{2}.$
\label{EHthmdim}
\end{theorem}

In analogy, for $b\geq 2,$ Eisenbud and Harris state the following conjecture:

\begin{conjecture}
For $d\geq 2,$ the largest irreducible component of $\widetilde{\Hilb}^{b,d}_{\P^n}$ is the one corresponding to the
family of degree-$d$
hypersurfaces contained in linear $(b+1)$-dimensional subspaces of $\P^n$; in particular,
$\dim\widetilde{\Hilb}^{b,d}_{\P^n}=(b+2)(n-b-1)-1+\binom{d+b+1}{b+1}$.
\label{dim_Hil}
\end{conjecture}

However, an easy rude bound for $\dim\widetilde{\Hilb}^{b,d}_{\P^n}$ is provided by the following

\begin{lemma}
\[\dim \widetilde{\Hilb}^{b,d}_{\P^n}\leq (n+1)\left(\binom{d+n}{n}-1 \right).\]
\label{easy_naive_dim_bound_Hlbrt_scheme}
\end{lemma}

\begin{proof}
Let $S_d=k[x_0,...,x_n]_d$ and let
\[Q^b=\{ ([f_0],...,[f_n])\in \P(S_d)\times ...\times\P(S_d)\ |\ \dim V(f_0,...,f_n)=b\};\]
this is a constructible subset of $\P(S_d)^{n+1}.$
Theorem 9.7.7 in \cite{EGAIV.3}, applied to the universal family $\Theta\into\P^n\times {\Hilb^{b,d}_{\P^n}}\to
\Hilb^{b,d}_{\P^n}$, implies that
\[\widehat{\Hilb}^{b,d}_{\P^n}:=\{Y\in\Hilb^{b,d}_{\P^n} \ |\
Y\ \text{is integral}\}\] is constructible.

Consider the incidence correspondence
\[\mathcal{A}:=\{ (Y,([f_0],...,[f_n]))\in\widehat{\Hilb}^{b,d}_{\P^n}\times Q^b \ \vert\ Y\into V(f_0,...,f_n)\}\]
with its two projections $\pi_1$ and $\pi_2$ to
$\widehat{\Hilb}^{b,d}_{\P^n}$ and $Q^b$, respectively.
The fibers under $\pi_2$ are finite, since for a fixed
$([f_0],...,[f_n])\in Q^b,$ $Y$ will have to be one of the finitely many irreducible components of
$V(f_0,...,f_n)_{\text{red}}$. Therefore,
$\dim \mathcal{A}\leq\dim Q^b\leq\dim\P(S_d)^{n+1}.$
On the other hand, Exercise I.3.28.2 in \cite{Kollar}
implies that $\pi_1$ is surjective, and the dimension bound  follows.
\end{proof}

When $b$ and $n$ are understood, we abbreviate
$\widetilde{\Hilb}^{b,d}_{\P^n}$ as $\widetilde{\Hilb}^d$. We now give some more related definitions, which will be used later. For a polynomial $P(z)\in\Q[z],$ define
\[\widetilde{\Omega}^P=\{(C,[F])\in\widetilde{\Hilb}^P\times\P(V)\mid C\subset V(F)_{\sing}\},\]
and for an integer $d\geq 1,$ define
\[\widetilde{\Omega}^d=\{(C,[F])\in\widetilde{\Hilb}^d\times\P(V)\mid C\subset V(F)_{\sing}\}\]
(as always, inclusions are scheme-theoretic).
Also, let
\[R^d=\{(C,[F])\in\widetilde{\Hilb}^d\times\P(V)\mid C\ \text{is integral,}\  C\subset V(F)_{\sing}\}\subset \widetilde{\Omega}^d.\]
and let $\overline{R^d}$ be the closure of $R^d$ inside $\widetilde{\Omega}^d$ (or inside $\widetilde{\Hilb}^d\times\P(V)$).

\section{Specialization arguments}
\label{specargsection}

The first main technique that we use in the proof of Theorem \ref{mainthm} is a specialization argument, that allows us to
bound $\dim\{F\in k[x_0,...,x_n]_l\mid C\subset V(F)_{\sing}\}$ from above for a fixed $C$, by degenerating $C$
to a union of linear spaces.
In Section \ref{speclemmapf}, we prove (for lack of reference)
that we can specialize a $b$-dimensional integral closed subscheme $C$ of $\P^n$ to a union of
$d$ $b$-dimensional linear spaces containing a common $(b-1)$-dimensional linear space. Next, the bound we obtain in
Section \ref{subsspecargsing}
will be the main ingredient for the discussion of the cases of small degree $2\leq d\leq B$ in
Chapter \ref{smalldegreesection}. Finally, Section \ref{specCsubVF} is a preparation for the discussion of the case of large
degree $d\geq B+1$, which will be treated in Chapter \ref{lrg_d_p}. The main result of Section \ref{specCsubVF} is stated
in Corollaries \ref{spdmprob} and \ref{spSprob} in a form that is most convenient for later purposes.

In this section, $k$ is a fixed algebraically closed field.

\subsection{Specialization of a closed subscheme to a union of linear subspaces}
\label{speclemmapf}

The result of this section is known, but we were unable to find a reference, so we include it here.

Let $C\subset\P^n$ be an integral $b$-dimensional closed subscheme of degree $d$.
Let $P=V(x_0,...,x_{n-b})$ be the $(b-1)$-dimensional ``linear subspace at infinity."
Suppose that the linear subspace $H=V(x_{n-b+1},...,x_n)$ intersects $C$
in $d$ distinct points $Q_i.$ Let $L_i$ be the unique $b$-dimensional linear space through $P$ and $Q_i$ (note that $L_i\neq L_j$ for $i\neq j$).
 Consider the projective linear transformations
\[A_a=\left(\begin{array}{ccc|ccc}
a & {} &{} &{}&{}&{}\\
{}&\ddots &{}&{}&{}&{}\\
{}&{}&a&{}&{}&{}\\ \hline
{}&{}&{}&1&{}&{}\\
{}&{}&{}&{}&\ddots&{}\\
{}&{}&{}&{}&{}&1
\end{array}
\right)\]
(where the bottom block has size $b\times b$) and let $C_a=A_aC$.

\begin{proposition}
The underlying topological space of the flat limit $C_0=\lim_{a\to 0}C_a$ is $\bigcup_{i=1}^d L_i.$
\label{specprop}
\end{proposition}

\begin{proof}
Let $C=V(\{G_s\})\subset\P^n$ (as a scheme), where $G_s\in k[x_0,...,x_n]$ are homogeneous.
Consider the map
\[\sigma\colon \P^n\times (\A^1-\{0\})\to \P^n,\quad ([x_0,...,x_n],a)\mapsto (x_0,...,x_{n-b},ax_{n-b+1},...,ax_n),\]
and define the closed subscheme $X\subset \P^n\times(A^1-\{0\})$ as the fiber product
\[\xymatrix{
X\ar@{^{(}->}[r]\ar[d] & \P^n\times (\A^1-\{0\})\ar[d]_{\sigma}\\
C\ar@{^{(}->}[r] & \P^n.
}
\]
In other words,
\[X=V(G_s(x_0,...,x_{n-b},ax_{n-b+1},...,ax_n))\subset \P^n_{\A^1-\{0\}},\]
where we regard $G_s(x_0,...,x_{n-b},ax_{n-b+1},...,ax_n)\in k[a,a^{-1}][x_0,...,x_n]$.
 This is a flat family $X\to\A^1-\{0\},$ whose fiber over
$a\neq 0$ is $C_a$ (as a subscheme of $\P^n$).

Let $\overline{X}$ be the scheme-theoretic closure of $X$ in $\P^n\times\A^1$.
By the proof of Proposition III.9.8 in \cite{H},
the flat limit of the family $(C_a)$ is the scheme-theoretic fiber $\overline{X}_0.$

Consider
\[Y=V(G_s(x_0,...,x_{n-b},ax_{n-b+1},...,ax_n))\subset\P^n\times\A^1.\]
Then $Y$ is a closed subscheme of $\P^n\times\A^1$ containing $X_0$ (scheme-theoretically), so $Y$ contains $\overline{X}.$
Thus, $\overline{X}_0\subset Y_0$ is a closed subscheme.

\[\xymatrix{
{} & {} & Y\ar[ldd] \\
X\ar[r]\ar@{^{(}->}[d]\ar[rru] & \overline{X}\ar@{^{(}->}[d]\ar@{.>}[ru]\\
\P^n\times (\A^1-\{0\})\ar@{^{(}->}[r]\ar[d] & \P^n\times\A^1\ar[d] \\
\A^1-\{0\}\ar@{^{(}->}[r] & \A^1
}
\]

We have
\[Y_0=V(G_s(x_0,...,x_{n-b},0,...,0))\subset\P^n.\]
Thus, as a set, $Y_0$ is $\bigcup_{i=1}^d L_i$.

By the assumption that $C$ and $H$ intersect transversely, we deduce that
$Y_0$ is reduced away from $P$ (in general, it could be nonreduced along $P$).
It follows that the Hilbert polynomial of $Y_0$ has the same
degree and leading coefficient (namely, $b$ and $d/b!$, respectively) as the Hilbert polynomial of $(Y_0)_{\red}$.
The
Hilbert polynomial of the flat limit $\overline{X}_0$ also has degree $b$ and leading coefficient $d/b!$.
Moreover, $Y_0$ is equidimensional, so
 the inclusion $\overline{X}_0\into Y_0$ must
be a homeomorphism.
\end{proof}

Let $V=k[x_0,...,x_n]_l.$ For each closed subscheme $C\subset\P^n$, define the $k$-vector space
\[W_C=\{F\in V\mid C\subset V(F)_{\sing}\}.\]

\begin{corollary} Let $C\into\P^n$ be an integral closed subscheme of dimension $b$ and degree $d$. There exist $d$ $b$-dimensional
linear subspaces $L_1,...,L_d$ of $\P^n$ containing a common $(b-1)$-dimensional linear subspace, such that
\[\dim W_C\leq \dim W_{\cup L_i},\]
where $\cup L_i$ is given the reduced induced structure.
\end{corollary}

\begin{proof}
Let $P$ be the Hilbert polynomial of $C$.
Apply the upper semicontinuity theorem (see Section 14.3 in \cite{E}) to the map
\[\{(C,[F])\in\Hilb^P\times \P(V)\mid C\subset V(F)_{\sing}\}\xto{\pi} \Hilb^P.\]
By Proposition \ref{specprop}, $\cup L_i$ (with some scheme structure) is the flat limit $C_0$ of a family $(C_a)$,
 with each $C_a$ ($a\neq 0$) being projectively
equivalent to $C=C_1,$ and hence
$\pi^{-1}(C_a)\simeq\pi^{-1}(C)$ for each $a\neq 0.$
Therefore,
\[\dim \P(W_C)=\dim \pi^{-1}(C)\leq\dim \pi^{-1}(C_0)=\dim\P(W_{C_0})\leq\dim\P(W_{\cup L_i}).\qedhere\]
\end{proof}

\subsection{An upper bound on the dimension of the space of $F$ such that
$C\subset V(F)_{\sing}$, for a fixed $C$ of small degree}
\label{subsspecargsing}

Fix a positive integer $l$. Recall the notation $V=k[x_0,...,x_n]_l.$

\begin{lemma}
Let $L\subset\P^n$ be a $b$-dimensional linear subspace. Then for $F\in V$, we have $L\subset V(F)_{\sing}$ if and only if
$F\in I_L^2$. Moreover,
\[\codim_V \{F\in V\mid L\subset V(F)_{\sing}\}=\binom{l+b}{b}+(n-b)\binom{l-1+b}{b}.\]
\label{cod_one_lin}
\end{lemma}

\begin{proof}
Without loss of generality, $L=V(I)$ with $I=(x_{b+1},...,x_n).$ For $F\in V$, we check that $(F,\frac{\d F}{\d x_i})\subset I$ if
and only if
$F\in I^2$.
Clearly, $(S/I^2)_l\simeq k[x_0,...,x_b]_l\oplus(\bigoplus_{i=b+1}^n k[x_0,...,x_b]_{l-1}x_i)$ has dimension as in the statement.
\end{proof}

\begin{lemma} Let $L_1,...,L_d$ be $d$ $b$-dimensional linear subspaces of $\P^n$ containing
a common $(b-1)$-dimensional linear subspace. Then
for $d\leq \frac{l+1}{2},$ we have
\[\codim_V(W_{\cup L_i})\geq \binom{l+b}{b}+(n-b)\sum_{e=1}^d\binom{l-2e+1+b}{b}.\]
\label{codi_d_spaces}
\end{lemma}

\begin{proof}
We induct on $d$. For $d=1$, we have equality. Assume $2\leq d\leq\frac{l+1}{2}.$
Assume that the $b$-dimensional linear subspaces $L_1,...,L_d$ all contain
$P=[0,\underbrace{*,...,*}_{b},0,...,0]$
and that none of them is contained in the hyperplane $x_0=0,$ so the ideal
of each of them is of the form $(x_{b+1}-p_{b+1} x_0,...,x_n-p_n x_0)$
for a uniquely determined tuple $(p_{b+1},...,p_n)\in k^{n-b}$. Let
\[I_i=(x_{b+1}-p_{b+1}^{(i)}x_0,x_{b+2}-p_{b+2}^{(i)}x_0,...,x_n-p_n^{(i)}x_0)\quad\text{for}\ i=1,...,d-1,\]
and without loss of generality
\[I_d=(x_{b+1},...,x_n).\]
By Lemma \ref{cod_one_lin}, $W_{\cup L_i}=(I_1^2\cap \dots\cap I_d^2)_l,$ so we have to give a lower bound for
$\dim(S/I_1^2\cap\dots\cap I_d^2)_l.$ For $e\in\{d-1,d\},$ let $\mu_e=\dim (S/I_1^2\cap\dots\cap I_e^2)_l.$
There is a short exact sequence
\[0\to \left(\frac{I_1^2\cap\dots\cap I_{d-1}^2}{I_1^2\cap \dots\cap I_d^2}\right)_l\to
\left(\frac{S}{I_1^2\cap\dots\cap I_d^2}\right)_l\to \left(\frac{S}{I_1^2\cap\dots\cap I_{d-1}^2}\right)_l\to 0.\]

For each $i=1,...,d-1,$ there exists $m_i\in\{b+1,...,n\}$ such that $p_{m_i}^{(i)}\neq 0.$ Let
$F=\prod_{i=1}^{d-1}(x_{m_i}-p^{(i)}_{m_i}x_0)^2.$ Consider all elements
\[Fx_j P(x_0,...,x_b)\in \left(\frac{I_1^2\cap\dots\cap I_{d-1}^2}{I_1^2\cap \dots\cap I_d^2}\right)_l,\]
where $j\in\{b+1,...,n\}$ and $P(x_0,...,x_b)$ runs through a basis of $k[x_0,...,x_b]_{l-2d+1}.$
Their number is $(n-b)\binom{l-2d+1+b}{b}$ and they are linearly independent.

Therefore
\[\mu_d\geq \mu_{d-1}+(n-b)\binom{l-2d+1+b}{b},\]
and the statement follows by induction.
\end{proof}

\subsection{An upper bound on the dimension of the space of $F$ such that
$C\subset V(F)$, for a fixed $C$ of small degree}
\label{specCsubVF}

\begin{lemma} Fix positive integers $l,m,$ with $m\leq l+1.$ For any integral closed subscheme $C\subset \P^n$ of dimension $b$ and
degree $d\geq m,$ we have
\[\codim_V\{G\in V\mid C\subset V(G)\}\geq \sum_{e=1}^m\binom{l-e+1+b}{b}=:A_{b}(l,m).\]
\label{cont_C_lrg_deg}
\end{lemma}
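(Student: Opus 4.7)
The codimension of interest equals the Hilbert function value $h_C(l):=\dim_k(S/I_C)_l$, via the tautological exact sequence $0\to(I_C)_l\to S_l\to (S/I_C)_l\to 0$; so the claim is equivalent to
\[h_C(l)\geq A_b(l,m).\]
My plan is to establish this by induction on $b$, with a nested induction on $l$. The base case $b=0$ is immediate: integrality forces $d=1$ and hence $m=1$, and $h_C(l)=1=A_0(l,1)$.

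For the inductive step $b\geq 1$, since $C$ is integral the graded coordinate ring $S/I_C$ is a domain, so any linear form $h\in S_1$ not vanishing on $C$ is a non-zerodivisor. The resulting short exact sequence
\[0\longto (S/I_C)(-1)\xto{\cdot h} S/I_C\longto S/(I_C+(h))\longto 0\]
yields $h_C(l)=h_C(l-1)+h_{C'}(l)$, where $C':=C\cap V(h)$ has pure dimension $b-1$ and degree $d$ by B\'ezout. Pascal's rule supplies the matching combinatorial identity
\[A_b(l,m)=A_b(l-1,m)+A_{b-1}(l,m),\]
valid for all $m\leq l+1$ under the convention $\binom{n}{b}=0$ for $n<b$ (so that the edge case $m=l+1$ collapses onto $A_b(l-1,l)$, since the $e=l+1$ term vanishes). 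Consequently the claim reduces to the two sub-bounds $h_C(l-1)\geq A_b(l-1,m)$, handled by the nested induction on $l$ (applied with $m'=\min(m,l)$ and then using the collapse just noted when $m=l+1$), and $h_{C'}(l)\geq A_{b-1}(l,m)$.

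The main obstacle is the second sub-bound, because $C'$ need not be integral and so the outer induction on $b$ does not literally apply to it. For $b\geq 2$, I would invoke Bertini's theorem on irreducibility of a generic hyperplane section (with the customary care in positive characteristic) to choose $h$ such that $C'$ is integral of dimension $b-1$ and degree $d$, at which point the outer induction closes. For $b=1$, the hyperplane section $C'$ is zero-dimensional of degree $d\geq m$, and the required inequality $h_{C'}(l)\geq m=A_0(l,m)$ reduces to the elementary fact that a degree-$d$ zero-dimensional subscheme of $\mathbb{P}^n$ imposes at least $\min(d,l+1)$ independent linear conditions on forms of degree $l$; this I would verify by selecting $m$ points in the support of $C'$ in linearly general position and producing the required independent conditions via a Vandermonde-type argument on a generic linear projection to $\mathbb{P}^1$.
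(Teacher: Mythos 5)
Your argument is correct in outline, but it takes a genuinely different route from the paper. The paper does not induct on $b$ or on $l$: it invokes the specialization technique of Section~4 of \cite{S1} to degenerate $C$ to a union of $d$ linear $b$-planes $L_1,\dots,L_d$ through a common $(b-1)$-plane, uses the semicontinuity statement $\codim_V\{G\mid C\subset V(G)\}\geq\codim_V\{G\mid L_1\cup\dots\cup L_d\subset V(G)\}$, discards all but $m$ of the planes, and then computes $\dim(S/I_1\cap\dots\cap I_m)_l$ directly by induction on $m$, producing the increment $\binom{l-m+1+b}{b}$ at each step by exhibiting explicit products $F\cdot P$ in the kernel of the restriction map. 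Your route replaces the degeneration by a hyperplane-section double induction: the identity $h_C(l)=h_C(l-1)+\dim\bigl(S/(I_C+(h))\bigr)_l$ for a non-zerodivisor linear form $h$ (valid because $I_C$ is prime), the Pascal-type identity $A_b(l,m)=A_b(l-1,m)+A_{b-1}(l,m)$, Bertini to keep $C\cap V(h)$ integral when $b\geq 2$, and the classical fact that $d$ points impose at least $\min(d,l+1)$ conditions on degree-$l$ forms to anchor the case $b=1$. What the paper's approach buys is independence from Bertini-type integrality statements (everything after the specialization is explicit linear algebra), at the cost of importing the nontrivial degeneration-and-semicontinuity machinery from \cite{S1}; your approach is self-contained modulo standard facts, and the two delicate points in it are both fine: integrality of the general hyperplane section holds in every characteristic for the complete system of hyperplanes when $\dim C\geq 2$, and the possible non-saturatedness of $I_C+(h)$ only makes $\dim\bigl(S/(I_C+(h))\bigr)_l$ larger than the Hilbert function of $C\cap V(h)$, which is the direction you need. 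The only detail left implicit is the base case of the induction on $l$ (say $l=0$, where $m\leq 1$ and both sides equal $1$), which is immediate.
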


\begin{proof}
As above, we specialize $C$ to a union of $d$ $b$-dimensional linear spaces containing $P$ (notation as in the previous lemma).
Throwing away some of these linear spaces if necessary, we may assume $d=m.$ So we induct on $m=1,...,l+1$ to give a lower
bound for $\dim (S/I_1\cap\dots\cap I_m)_l$.
We follow the notation and proof of Lemma \ref{codi_d_spaces},
except that this time, we consider
$F=\prod_{i=1}^{m-1}(x_{m_i}-p^{(i)}_{m_i}x_0)$ and the linearly independent elements
$FP\in (I_1\cap\dots\cap I_{m-1}/I_1\cap\dots\cap I_m)_l,$ where $P$ runs through a basis of $k[x_0,...,x_b]_{l-m+1}.$
\end{proof}

\begin{corollary} Let $k$ be an algebraically closed field, and $k_0\subset k$ a subfield. Again, let $m,l$ be fixed integers,
with $m\leq l+1.$ Let $C\subset\P^n_k$ be a $b$-dimensional integral closed subscheme
(not necessarily defined over $k_0$) of degree $d\geq m.$ Then
\[\codim_{k_0[x_0,...,x_n]_l}\{G\in k_0[x_0,....,x_n]_l \mid C\subset V(G)\}\geq A_b(l,m).\]
Here, the condition $C\subset V(G)$ (inclusion of closed subschemes of $\P^n_{k}$)
 makes sense when we regard $G\in k[x_0,...,x_n]_l$ first.
\label{containcurve}
 \end{corollary}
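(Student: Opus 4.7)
The corollary is really a descent of Lemma \ref{cont_C_lrg_deg} from $k$ down to the subfield $k_0$, and my plan is to reduce it to a simple linear-algebra fact about a field extension.

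First, I would package the previous lemma in linear-algebra form. The condition ``$C \subset V(G)$'' is $k$-linear in the coefficients of $G$, so it defines a $k$-subspace
\[W = \{G \in k[x_0,\ldots,x_n]_l \mid C \subset V(G)\} \subset V := k[x_0,\ldots,x_n]_l,\]
and Lemma \ref{cont_C_lrg_deg} immediately gives $\codim_k W \geq A_b(l,m)$. Writing $V_0 = k_0[x_0,\ldots,x_n]_l$, the subset appearing in the corollary is precisely $W \cap V_0$, so what must be shown is $\codim_{k_0}(W \cap V_0) \geq A_b(l,m)$.

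Second, I would invoke the general fact that for any $k$-subspace $W$ of $V = V_0 \otimes_{k_0} k$ one has $\codim_{k_0}(W \cap V_0) \geq \codim_k W$. The argument is short: by flatness of the field extension $k_0 \hookrightarrow k$, the natural $k_0$-linear map $(W \cap V_0) \otimes_{k_0} k \to V_0 \otimes_{k_0} k = V$ is injective, and its image lies inside $W$ (since $W \cap V_0 \subset W$ and $W$ is a $k$-subspace). Therefore
\[\dim_{k_0}(W \cap V_0) = \dim_k\bigl((W \cap V_0) \otimes_{k_0} k\bigr) \leq \dim_k W,\]
and subtracting from $\dim_{k_0} V_0 = \dim_k V$ yields the desired codimension bound.

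The only subtle point, and the main conceptual obstacle, is that $C$ itself need not be defined over $k_0$, so one cannot expect to run the argument of Lemma \ref{cont_C_lrg_deg} entirely inside $V_0$ (the specialization of $C$ to linear subspaces takes place over $k$). Recasting the problem as descent of a codimension bound along a subfield inclusion sidesteps this issue entirely, and after that the verification is routine.
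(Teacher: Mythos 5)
Your proposal is correct and is essentially the paper's own argument: the paper reduces to showing $\dim_{k_0}\{G\in k_0[x_0,\ldots,x_n]_l\mid C\subset V(G)\}\leq\dim_k\{G\in k[x_0,\ldots,x_n]_l\mid C\subset V(G)\}$ and observes this holds because $k_0$-linearly independent elements of $k_0[x_0,\ldots,x_n]_l$ remain $k$-linearly independent over $k$, which is exactly the content of your flatness/injectivity step. The tensor-product packaging is just a different phrasing of the same linear-algebra fact.
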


\begin{proof}
It suffices to prove that
\[\dim_{k_0}\{G\in k_0[x_0,...,x_n]_l\mid C\subset V(G)\}\leq\dim_k\{G\in k[x_0,...,x_n]_l\mid C\subset V(G)\}.\]
This is automatic, since
any $k_0$-linearly independent elements in $k_0[x_0,...,x_n]_l$ are $k$-linearly independent
in $k[x_0,...,x_n]_l$.
\end{proof}

\begin{corollary} Let $k_0=\FF_q$ now. Let $C\subset \P^n_{\overline{\FF_p}}$ be an integral
 $b$-dimensional closed subscheme of degree $d\geq m$ (again, $m\leq l+1$ is fixed).
For $G$ chosen randomly from $\FF_q[x_0,...,x_n]_l,$ we have
\[\Prob(C\subset V(G))\leq q^{-A_b(l,m)}.\]
\label{spdmprob}
\end{corollary}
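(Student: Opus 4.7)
The plan is to deduce this corollary as an immediate consequence of the codimension bound in Corollary \ref{containcurve}, using that the uniform probability of a vector-space random variable landing in a linear subspace equals $q^{-\codim}$.

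First I would observe that the set
\[W:=\{G\in\FF_q[x_0,...,x_n]_l\mid C\subset V(G)\}\]
is an $\FF_q$-linear subspace of $\FF_q[x_0,...,x_n]_l$. This is because if $G_1,G_2\in W$, then the homogeneous ideal $I(C)\subset\overline{\FF_p}[x_0,...,x_n]$ contains both $G_1$ and $G_2$, hence contains $\lambda_1 G_1+\lambda_2 G_2$ for all $\lambda_1,\lambda_2\in\FF_q\subset\overline{\FF_p}$, so $\lambda_1 G_1+\lambda_2 G_2\in W$.

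Next, I would invoke Corollary \ref{containcurve} with $k_0=\FF_q$ and $k=\overline{\FF_p}$ to conclude that
\[\codim_{\FF_q[x_0,...,x_n]_l}W\geq A_b(l,m).\]
Finally, since $G$ is chosen uniformly from the $\FF_q$-vector space $\FF_q[x_0,...,x_n]_l$ and $W$ is an $\FF_q$-subspace, the probability that $G\in W$ equals $|W|/|\FF_q[x_0,...,x_n]_l|=q^{-\codim_{\FF_q[x_0,...,x_n]_l}W}\leq q^{-A_b(l,m)}$, which is the desired bound.

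There is no real obstacle here: the content of the statement has already been established in Corollary \ref{containcurve}; the only thing to verify is the linearity of $W$ over $\FF_q$, which is formal. The corollary is essentially a repackaging of the codimension bound into a probabilistic form convenient for the Lang-Weil counting arguments that will appear later.
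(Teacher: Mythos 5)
Your proposal is correct and is essentially identical to the paper's own proof, which also derives the bound directly from Corollary \ref{containcurve} via the identity $\#W=q^{\dim W}$ for the $\FF_q$-subspace $W$ of polynomials vanishing on $C$. Your explicit verification that $W$ is $\FF_q$-linear is a detail the paper leaves implicit, but the argument is the same.
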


\begin{proof}
This is just a restatement of Corollary \ref{containcurve}, since
\[\#\{G\in \FF_q[x_0,...,x_n]_l\mid C\subset V(G)\}=q^{\text{dim}\{G\mid C\subset V(G)\}}.\qedhere\]
\end{proof}

\begin{lemma}
Let $k$ be an algebraically closed field, and $D\subset\P^n_k$ an integral closed subscheme of dimension at least $b+1$. Then
\[\codim_{k[x_0,...,x_n]_l}\{G\in k[x_0,...,x_n]_l\mid D\subset V(G)\}\geq \binom{l+b+1}{b+1}.\]
\label{bplusonean}
\end{lemma}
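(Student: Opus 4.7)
The plan is to identify the codimension in question with a value of the Hilbert function of $S$, and then bound that Hilbert function below by applying graded Noether normalization.

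First, since $S$ is integral, its homogeneous ideal $I(S)\subset k[x_0,\ldots,x_n]$ is prime and in particular radical, so a homogeneous polynomial $G$ of degree $l$ satisfies $S\subset V(G)$ (scheme-theoretically, or equivalently set-theoretically) if and only if $G\in I(S)_l$. Writing $R(S)=k[x_0,\ldots,x_n]/I(S)$ for the homogeneous coordinate ring and $h_S(l)=\dim_k R(S)_l$ for the Hilbert function, this gives
\[
\codim_{k[x_0,\ldots,x_n]_l}\{G\mid S\subset V(G)\}=\dim_k\bigl(k[x_0,\ldots,x_n]_l/I(S)_l\bigr)=h_S(l).
\]
So the statement reduces to the purely algebraic inequality $h_S(l)\geq\binom{l+b+1}{b+1}$.

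Second, set $r=\dim S\geq b+1$. Then $R(S)$ is a finitely generated graded domain over the infinite field $k$ of Krull dimension $r+1$. By the graded version of Noether normalization, there exist linear forms $\ell_0,\ldots,\ell_r\in k[x_0,\ldots,x_n]_1$ whose images in $R(S)$ are algebraically independent and over which $R(S)$ is a finite module. The inclusion $k[\ell_0,\ldots,\ell_r]\hookrightarrow R(S)$ is then injective in every degree, so
\[
h_S(l)\geq \dim_k k[\ell_0,\ldots,\ell_r]_l=\binom{l+r}{r}\geq\binom{l+b+1}{b+1},
\]
where the last inequality uses that $\binom{l+r}{r}$ is nondecreasing in $r$ for $l\geq 0$ (the ratio of consecutive values is $(l+r+1)/(r+1)\geq 1$).

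There is no real obstacle here: the argument is essentially bookkeeping once one observes that the codimension in the statement is exactly the Hilbert function of $S$, and that the standard graded Noether normalization over an infinite field produces a polynomial subring of $R(S)$ of the correct Krull dimension. Geometrically this amounts to choosing a linear projection $\pi\colon\P^n\dashrightarrow\P^r$ that restricts to a finite surjection on $S$ and pulling back degree-$l$ forms from $\P^r$ to obtain linearly independent classes in $R(S)_l$.
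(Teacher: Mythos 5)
Your proof is correct, but it takes a genuinely different route from the paper's. The paper disposes of this lemma in one line: after reducing to the case $\dim S=b+1$ (by passing to a $(b+1)$-dimensional integral subvariety of $S$), it invokes Lemma \ref{cont_C_lrg_deg} with $b$ replaced by $b+1$ and $m=1$, noting $A_{b+1}(l,1)=\binom{l+b+1}{b+1}$; the content of that lemma is a flat specialization of the subscheme to a union of linear subspaces (the technique of Section 4 of \cite{S1}), after which one keeps a single linear $(b+1)$-plane $L$ and reads off $\codim\{G\mid L\subset V(G)\}=\dim_k k[x_0,\ldots,x_{b+1}]_l=\binom{l+b+1}{b+1}$. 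You instead identify the codimension with the Hilbert function $h_S(l)$ of the homogeneous coordinate ring --- legitimate because $S$ is integral, so its saturated ideal is prime and $S\subset V(G)$ is equivalent to $G\in I(S)_l$ --- and bound $h_S(l)$ below by $\binom{l+r}{r}$ via a graded Noether normalization by linear forms, which is available since $k$ is algebraically closed, hence infinite; the final monotonicity $\binom{l+r}{r}\geq\binom{l+b+1}{b+1}$ replaces the paper's initial reduction to $\dim S=b+1$. Your argument is self-contained and avoids the specialization machinery entirely, whereas the paper's is essentially free because Lemma \ref{cont_C_lrg_deg} is needed anyway for the large-degree estimates; both yield exactly the same bound.
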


\begin{proof}
We can assume that $\dim D=b+1.$
This is a particular case of Lemma \ref{cont_C_lrg_deg}; just note that $A_{b+1}(l,1)=\binom{l+b+1}{b+1}.$
\end{proof}

The same argument leading from Lemma \ref{cont_C_lrg_deg} to Corollary \ref{spdmprob} leads from Lemma
\ref{bplusonean} to the following

\begin{corollary}
Let $k_0=\FF_q$ now. Let $D\subset \P^n_{\overline{\FF_p}}$ be an integral closed subscheme of dimension at least $b+1$.
For $G$ chosen randomly from $\FF_q[x_0,...,x_n]_l,$ we have
\[\Prob(D\subset V(G))\leq q^{-\binom{l+b+1}{b+1}}.\]
\label{spSprob}
\end{corollary}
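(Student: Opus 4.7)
The plan is to mirror exactly the two-step passage from Lemma \ref{cont_C_lrg_deg} to Corollary \ref{spdmprob}, substituting Lemma \ref{bplusonean} for Lemma \ref{cont_C_lrg_deg} at the start.

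First, I would upgrade Lemma \ref{bplusonean} to an $\FF_q$-version, in analogy with Corollary \ref{containcurve}. Namely, for $S\subset\P^n_{\overline{\FF_p}}$ integral of dimension at least $b+1$, I claim
\[\codim_{\FF_q[x_0,\dots,x_n]_l}\{G\in\FF_q[x_0,\dots,x_n]_l\mid S\subset V(G)\}\geq\binom{l+b+1}{b+1}.\]
As in the proof of Corollary \ref{containcurve}, this reduces to comparing the $\FF_q$-dimension of the subspace of $G\in\FF_q[x_0,\dots,x_n]_l$ with $S\subset V(G)$ to the $\overline{\FF_p}$-dimension of the corresponding subspace of $\overline{\FF_p}[x_0,\dots,x_n]_l$, using the standard fact that $\FF_q$-linearly independent polynomials remain $\overline{\FF_p}$-linearly independent after extension of scalars. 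The $\overline{\FF_p}$-codimension is at least $\binom{l+b+1}{b+1}$ by Lemma \ref{bplusonean}.

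Second, I would translate this codimension statement into the probability bound. Since $\{G\in\FF_q[x_0,\dots,x_n]_l\mid S\subset V(G)\}$ is an $\FF_q$-linear subspace of $\FF_q[x_0,\dots,x_n]_l$, its cardinality is $q$ to the power of its $\FF_q$-dimension. Dividing by the total number $q^{\dim_{\FF_q}\FF_q[x_0,\dots,x_n]_l}$ gives $\Prob(S\subset V(G))=q^{-c}$, where $c$ is the codimension, and by the first step $c\geq\binom{l+b+1}{b+1}$.

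There is no real obstacle: the argument is entirely parallel to the one already carried out in detail in Corollaries \ref{containcurve} and \ref{spdmprob}, and the only input specific to the present statement—the codimension bound over an algebraically closed field—is exactly Lemma \ref{bplusonean}. The only point that merits a verification is that $S$ need not be defined over $\FF_q$; but as in Corollary \ref{containcurve}, the condition ``$S\subset V(G)$'' is interpreted after regarding $G\in\overline{\FF_p}[x_0,\dots,x_n]_l$, and the linear-independence comparison between $\FF_q$ and $\overline{\FF_p}$ goes through without change.
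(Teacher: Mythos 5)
Your proposal is correct and is exactly the argument the paper intends: the paper itself states that ``the same argument leading from Lemma \ref{cont_C_lrg_deg} to Corollary \ref{spdmprob} leads from Lemma \ref{bplusonean}'' to this corollary, which is precisely your two-step route (scalar extension via linear independence, then counting points of an $\FF_q$-subspace).
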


\section{The case of small degree $d$}
\label{smalldegreesection}

With the preparations from the previous section, it is now easy to handle the cases of small degree $2\leq d\leq B$
by applying the theorem on the dimension of fibers to the map
$\widetilde{\Omega}^d\to\widetilde{\Hilb}^d$ (Section \ref{smldegsubsection}).
The main result of this section is Corollary \ref{sml_m}.
 Finally, in Section \ref{seccompsmldegsection}, we
perform the analogous calculation for the second largest component of $X$.

Again, $k$ is a fixed algebraically closed field.

\subsection{The component corresponding to $d=1$}

Recall the definitions of $X^1$ and $a_{n,b}(l)$ from the introduction.

\begin{lemma} The set
$X^1$ is an irreducible closed subset of $X$ of dimension equal to $A:=\binom{l+n}{n}-a_{n,b}(l).$
\label{X^1exists}
\end{lemma}

\begin{proof}
Consider
\[\Omega^1=\{(L,[F])\in \Gr(b,n)\times\P(V)\mid L\subset V(F)_{\sing}\}\subset \Gr(b,n)\times\P(V).\]

Let
$\pi\colon \Omega^1\to \Gr(b,n)$ and $\rho\colon \Omega^1\to \P(V)$ denote the two projections.
The fiber of $\pi$ over any linear $b$-dimensional $L$ is $\P(W_{L}).$ So $\Omega^1$
is irreducible, and has dimension $\dim \P(W_{L})+\dim \Gr(b,n)=A$ (use Lemma \ref{cod_one_lin}).

Consider now $\rho\colon \Omega^1\onto X^1.$
To prove that $\Omega^1$ and $X^1$ have the same dimension, it suffices to show that some fiber of $\rho$ is $0$-dimensional.
This is easy (we prove a more general statement later; see Lemma \ref{ctrlsng}).
\end{proof}

\subsection{The case $d\leq B$ (small degree)}
\label{smldegsubsection}

Fix an integer $l$ as usual, and fix an integer $d>1.$ As usual, let $V=k[x_0,...,x_n]_l.$ Recall the definitions of $R^d$ and $\overline{R^d}$ from Section \ref{gather_defs}.

Let $\pi\colon \widetilde{\Hilb}^d\times\P(V)\to\widetilde{\Hilb}^d$ and
$\rho\colon \widetilde{\Hilb}^d\times\P(V)\to\P(V)$ denote the first and second projections.

\begin{lemma} Fix an integer $B$. There exists $l_0$ (effectively computable) such that for all pairs $(d,l)$ with
$2\leq d\leq B$ and  $l\geq l_0,$
we have
\[\dim \overline{R^d}<\dim X^1.\]
It follows that $\dim\rho(\overline{R^d})<\dim X^1.$
\label{sml}
\end{lemma}

\begin{proof}
Let $Z$ be an irreducible component of $\overline{R^d}.$
Certainly, $Z\cap R^d\neq\emptyset$, so $\pi(Z)$ contains an integral subscheme $C\subset\P^n$.
Degenerate $C$ to a union $\bigcup_{i=1}^d L_i$ of $d$ $b$-dimensional linear spaces, as in Section \ref{speclemmapf}. Let $L_0$
be any linear $b$-dimensional subspace of $\P^n$.
By abuse of notation, let
$\pi\colon Z\onto\pi(Z)\subset\widetilde{\Hilb}^d.$ By the theorem on the dimension of fibers, we have
\begin{align}
\dim Z &\leq \dim \pi^{-1}(C)+\dim \pi(Z)\notag\\
&\leq \dim\P(W_C)+\dim\pi(Z)\notag\\
&\leq \dim\P(W_{\cup L_i})+\dim\widetilde{\Hilb^d}.
\label{dimfiberineq}
\end{align}
Thus, it suffices to check that
\[\dim \P(W_{\cup L_i})+\dim \widetilde{\Hilb}^d<\dim \P(W_{L_0})+(b+1)(n-b)\]
(recall Lemma \ref{X^1exists}),
or, equivalently, that
\[\codim_V W_{L_0}+\dim\widetilde{\Hilb}^d<\codim_V W_{\cup L_i}+(b+1)(n-b).\]

By Lemmas \ref{cod_one_lin} and \ref{codi_d_spaces}, it suffices to prove the inequality
\begin{align*}
\binom{l+b}{b}+(n-b)\binom{l-1+b}{b}+\dim\widetilde{\Hilb}^d\\
<\binom{l+b}{b}+(n-b)\sum_{e=1}^d\binom{l-2e+1+b}{b}+(b+1)(n-b),
\end{align*}
or, equivalently,
\begin{equation}
\dim\widetilde{\Hilb}^d-(b+1)(n-b)<(n-b)\sum_{e=2}^d \binom{l-2e+1+b}{b},
\label{csmuncond}
\end{equation}
for all $2\leq d\leq B$ and $l\geq l_0$.

When $d\in\{2,...,B\}$ is fixed, this inequality is satisfied for $l\gg 0.$ Moreover, using the naive bound for
$\dim \widetilde{\Hilb}^d$ given in Lemma \ref{easy_naive_dim_bound_Hlbrt_scheme}, we can compute an effective bound for $l.$
\end{proof}

\begin{remark} If we use the Eisenbud--Harris bound for $\dim \widetilde{\Hilb}^d$ from Theorem \ref{EHthmdim} (when $b=1$) and Conjecture \ref{dim_Hil}
(when $b\geq 2$), we can strengthen the statement of the previous Lemma and show that there exists $l_0$ (effectively computable) such that for all pairs $(d,l)$ with
$2\leq d\leq\frac{l+1}{2}$ and  $l\geq l_0,$
we have
\[\dim \overline{R^d}<\dim X^1.\]
Of course, using the Eisenbud--Harris bounds will yield better bounds on $l_0$. We will follow the approach of using instead the naive bound of Lemma \ref{easy_naive_dim_bound_Hlbrt_scheme}
for the dimension of the
restricted Hilbert scheme in order to avoid conditional statements when $b=2$.
\label{sml_better_bound}
\end{remark}

\begin{corollary}
Fix an integer $B$, and let $l_0$ be as in Lemma \ref{sml}. Let $2\leq d\leq B$ and $l\geq l_0$.
If $Z\subset T^d_k$ is an irreducible component, then either $Z=X^1$, or
$\dim Z<\dim X^1.$
\label{sml_m}
\end{corollary}

\begin{proof}
We claim that if $[F]\in \hat{T}^d$ (as defined in Section \ref{inc_corr_subsection}) then $V(F)_{\sing}$ contains an integral $b$-dimensional
subscheme of degree $d$. Indeed, $V(F)_{\sing}$ contains some integral $b$-dimensional closed subscheme of degree
$\tilde{d}\in\{1,...,d\};$
if $[F]\notin\cup_{d'=1}^{d-1}T^{d'}_k,$ then necessarily $\tilde{d}=d$.

Now, we can induct on $d$, so assume that $Z\nsubseteq \cup_{d'=1}^{d-1}T^{d'}_k.$ Note that
 $Z-\left(Z\cap (\cup_{d'=1}^{d-1} T^{d'})\right)\subset Z$
is a dense open subset of $Z$, which therefore has the same dimension as $Z$, but is contained in
$\hat{T}^d\subset \rho(R^d)\subset\rho(\overline{R^d}).$
Thus $\dim Z\leq\dim\rho(\overline{R^d})<\dim X^1,$ by Lemma \ref{sml}
\end{proof}

\subsection{Preparations for the computation of the second largest component}
\label{seccompsmldegsection}

Here we discuss a calculation similar to the one in the previous section, which will later be used for the computation of the
dimension of the second largest component of $X$. Define
\[\beta_2(l)=\binom{l+b+1}{b+1}-\binom{l+b-3}{b+1}+(n-b-1)\left(\binom{l+b}{b+1}-\binom{l+b-2}{b+1}\right)\]
and set $\gamma_2(l)=\beta_2(l)+1-(b+2)n+\frac{b(b+1)}{2}.$ We will later see that $\binom{l+n}{n}-\gamma_2(l)$ is the dimension
of the second largest component of $X$, at least when $\ch k\neq 0.$

\begin{lemma}
Fix an integer $B$.
There exists $l_0$ (effectively computable) such that for all pairs $(d,l)$ with
$3\leq d\leq B$ and $l\geq l_0$ (if $b=n-1,$ assume $d\geq 4$),
and any irreducible component $Z$ of $T^d_k,$ either $Z\subset T^1_k\cup T^2_k,$ or
\[\dim Z<\binom{l+n}{n}-\gamma_2(l).\]
\label{scompsmldeg}
\end{lemma}
(In the case $b=n-1, d=3$, we will prove a slightly weaker but sufficient statement in
Remark \ref{techdetscomprem}.)

\begin{proof}
Precisely as in Lemma \ref{sml}, because of inequality (\ref{dimfiberineq}), it suffices to establish the inequality
\[\dim\P(W_{\cup L_i})+\dim\widetilde{\Hilb^d}<\binom{l+n}{n}-\gamma_2(l),\qquad \text{i.e.,}\]
\[\gamma_2(l)-1+\dim\widetilde{\Hilb}^d<\codim_V(W_{\cup L_i}).\]
Let $c=-(b+2)n+\frac{b(b+1)}{2}$.
By Lemma \ref{codi_d_spaces}, it suffices to prove that
\[\gamma_2(l)-1+\dim \widetilde{\Hilb}^d <\binom{l+b}{b}+(n-b)\sum_{e=1}^d\binom{l-2e+1+b}{b},\]
or, equivalently, that
\[\binom{l+b-3}{b}+(n-b)\binom{l+b-3}{b-1}+c+\dim\widetilde{\Hilb^d}<(n-b)\sum_{e=3}^d\binom{l-2e+1+b}{b}\]
for the appropriate values of $d$ and $l$. For a fixed $d\in\{3,...,B\},$ the right hand side is a polynomial in $l$ of degree $b$ and leading coefficient
$\frac{(n-b)(d-2)}{b!}$, while the left hand side is a polynomial in $l$ of degree $b$ but smaller leading coefficient $\frac{1}{b!}$.
\end{proof}

\begin{remark}
Again, using the Eisenbud--Harris bound for the dimension of $\widetilde{\Hilb^d}$ (conjectural for $b\geq 2$),
we can prove that there exists an effectively computable $l_0$ so that the conclusion of the above Lemma holds
for all pairs $(d,l)$ with $3\leq d\leq\frac{l+1}{2}$ and $l\geq l_0$. Again, this approach would produce a smaller value for $l_0$ but would be conditional when $b\geq 2$.
\end{remark}

\section{The case of large degree $d$, when $k=\overline{\FF_p}$}
\label{lrg_d_p}

Fix $n$ and $1\leq b\leq n-1$ as usual, and fix a prime $p$. Let $k=\overline{\FF_p}.$ Recall the definition of $A_b(l,m)$ from
Section \ref{specCsubVF} and the definition of $T^d_k$ and $\hat{T}^d$ from Section \ref{inc_corr_subsection}.

The goal of this section is to handle the case of large $d$ when $k=\overline{\FF_p}.$ Specifically, we prove the following

\begin{proposition}
Fix a triple of positive integers $(l,m,a)$. Set
$\tau=\lfloor\frac{l-1}{p}\rfloor$ and  $m'=\min(m,\tau+1).$ Suppose that
\[\binom{\tau+b+1}{b+1}>a-1\quad\text{and}\quad A_b(\tau,m')>a-1.\]
Let $d\geq m.$ If $Z$ is an irreducible component of $T^d_k$, then either $Z\subset T^{d'}_k$ for some $1\leq d'<d,$ or
\[\dim Z\leq \binom{l+n}{n}-a.\]
\label{lrg_m}
\end{proposition}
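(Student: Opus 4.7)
The plan is to descend to a finite field and apply Lang--Weil. Let $Z$ be an irreducible component of $T^d_k$ not contained in $T^{d'}_k$ for any $1\leq d'<d$. For generic $[F]\in Z$, $V(F)_{\sing}$ contains an integral $b$-dimensional closed subscheme of degree exactly $d$: the subscheme witnessing $F\in T^d$ has $b$-dimensional irreducible components whose degrees (with multiplicities) sum to $d$, and if all were strictly less than $d$ we would have $F\in T^{d'}$ for some $d'<d$, contradicting $Z\not\subset T^{d'}$. Descending $Z$ to an $\FF_{q_0}$-model, by Lang--Weil it suffices to prove
\begin{multline*}
N(q):=\#\bigl\{F\in\FF_q[x_0,\dots,x_n]_l : V(F)_{\sing}\supset C\\
\text{ for some integral $b$-dim $C$ with }\deg C\geq m\bigr\}=O(q^{\binom{l+n}{n}-a+1}),
\end{multline*}
as then $\#Z(\FF_q)=O(q^{\binom{l+n}{n}-a})$ and Lang--Weil yields $\dim Z\leq\binom{l+n}{n}-a$. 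A coordinate rotation (factor of $n+1$) lets us further restrict to $C\not\subset V(x_n)$, so that Corollary~\ref{dhcor} applies.

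Count $N(q)$ by a Poonen-style sieve. Dehomogenize via $x_n$, identifying $\FF_q[x_0,\dots,x_n]_l\cong\FF_q[x_0,\dots,x_{n-1}]_{\leq l}$, and parametrize $f$ by $(f_0,g_0,\dots,g_n)$ with $f_0\in\FF_q[x_0,\dots,x_{n-1}]_{\leq l}$ and each $g_i\in\FF_q[x_0,\dots,x_{n-1}]_{\leq\tau}$, via $f=f_0+\sum_{i=0}^{n-1}x_ig_i^p+g_n^p$. In characteristic $p$ this yields $\d f/\d x_i=\d f_0/\d x_i+g_i^p$ for $i=0,\dots,n-1$. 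The map to $f$ is surjective with uniform fibers, so it suffices to bound the bad-tuple count. Fix $f_0$ and successively choose $g_0,\dots,g_{n-b-1}$ so that
\[
Y_j:=V\bigl((\d f_0/\d x_0+g_0^p)^{\sim},\dots,(\d f_0/\d x_j+g_j^p)^{\sim}\bigr)\subset\P^n_k
\]
has all irreducible components of dimension $\leq b$ by step $j=n-b-1$. The bad event at step $j$ is that some $(\geq b+1)$-dimensional component of $Y_{j-1}$ is contained in the next hypersurface; B\'ezout bounds the number of such components by a constant depending only on $l,n,b$, and Corollary~\ref{dhcor}(a) bounds the bad probability per component by $q^{-\binom{\tau+b+1}{b+1}}\leq q^{-a}$.

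Once $\dim Y_{n-b-1}\leq b$, B\'ezout bounds the number of integral $b$-dimensional components of degree $\geq m$ by a constant $M=M(l,n,b)$. Now pick $g_{n-b}$: for each such $C$ with $C\not\subset V(x_n)$, the probability that $C\subset V((\d f_0/\d x_{n-b}+g_{n-b}^p)^{\sim})$ is at most $q^{-A_b(\tau,m')}\leq q^{-a}$ by Corollary~\ref{dhcor}(b), giving total bad probability $\leq Mq^{-a}$ at this step. The remaining $g_{n-b+1},\dots,g_n$ are free. A union bound yields bad fraction $O(q^{-(a-1)})$, whence $N(q)=O(q^{\binom{l+n}{n}-a+1})$. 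The main obstacle is the technical bookkeeping: handling $l\not\equiv 1\pmod p$ (the source of $\tau=\lfloor(l-1)/p\rfloor$ and the truncation $m'=\min(m,\tau+1)$), verifying uniform fibers of $(f_0,g_\bullet)\mapsto f$, the coordinate rotation reducing to $C\not\subset V(x_n)$, and using Euler's relation (or a separate case when $p\mid l$) to deduce $F|_C=0$ from the vanishing of all partials on $C$, so that $C\subset V(F)_{\sing}$.
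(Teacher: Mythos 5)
Your proposal is correct and follows essentially the same route as the paper: Lang--Weil reduction to a point count over $\FF_q$, the Poonen-style decomposition $F=F_0+\sum x_iG_i^p$ so that $\partial F/\partial x_i=\partial F_0/\partial x_i+G_i^p$, B\'ezout to bound the number of components at each stage, Corollary~\ref{dhcor} for the per-component probabilities, and a union over the $n+1$ coordinate charts. One small correction: you cannot arrange that $Y_{n-b-1}$ has \emph{all} components of dimension $\leq b$, because Corollary~\ref{dhcor} gives no control over components lying inside $V(x_n)$ (the degree-$(l-1)$ homogenizations may contain the hyperplane $V(x_n)$ entirely); the paper's events therefore impose the dimension condition only on components not contained in $V(x_n)$, which suffices since the final union over the $n+1$ charts guarantees that the sought $C$ avoids some coordinate hyperplane.
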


Let $Z\subset T^d_k$ be an irreducible component (notation and assumptions as above). Suppose that
$Z\nsubseteq \bigcup_{d'=1}^{d-1}T^{d'}_k.$ Then $Z-\left(Z\cap (\bigcup_{d'=1}^{d-1} T^{d'}_k)\right)\subset Z$ is a dense
open subset,  and
therefore is of the same dimension as $Z$. It is contained in $\hat{T}^d$.
So the goal is now to prove that $\dim \hat{T}^d\leq \binom{l+n}{n}-a$.

\subsection{Reduction to a problem over finite fields}
\label{redffsubsect}

We begin with a general discussion, which applies to any (quasiprojective) variety over $\overline{\FF_p}.$
Let $T=\cap V(G_i)-\cap V(G_j')\subset \P^M_{\overline{\FF_p}}$ be a quasiprojective variety over $\overline{\FF_p}$,
where $G_i,G_j'\in\overline{\FF_p}[y_0,...,y_M]$.
Let $A$ be an integer, and suppose we want to prove that $\dim T\leq A$.
There is a finite field $\FF_{q_0}$ such that $G_i,G_j'\in\FF_{q_0}[y_0,...,y_M],$ so $T$ comes from
$T_0:=\cap V(G_i)-\cap V(G_j')\subset\P^M_{\FF_{q_0}},$ which is now a variety over $\FF_{q_0}.$
We know that $\dim T=\dim T_0,$ so suffices to prove that $\dim T_0\leq A$.
For this, by the result of Lang-Weil \cite{LW}, it suffices to prove that
$\#T_0(\FF_q)=O(q^A)$ as $q\to\infty$ (through powers of $q_0$ of course).

Consider now $T=\hat{T}^d\subset \P(\overline{\FF_p}[x_0,...,x_n]_l)$, and let $\hat{T}^d_0$ (a variety over a finite field
$\FF_{q_0}$) be as in the previous paragraph.
In particular, $\hat{T}^d_0(\FF_q)$ consists of all $[F]\in (\FF_q[x_0,...,x_n]_l-\{0\})/\FF_q^*$ such that when we regard $[F]$ in
$(\overline{\FF_p}[x_0,...,x_n]_l-\{0\})/\overline{\FF_p}^*$, we have that
$[F]\in\hat{T}^d\subset \P(\overline{\FF_p}[x_0,...,x_n]_l)$.

\begin{remark}
Even if $F$ has coefficients in $\FF_q,$ we always consider
$V(F)$ and $V(F)_{\sing}$ as subschemes of $\P^n_{\overline{\FF_p}}$ by first
regarding $F$ in $\overline{\FF_p}[x_0,...,x_n].$
\end{remark}

By the argument in the proof of Corollay \ref{sml_m},
the set $\hat{T}^d_0(\FF_q)$ is a subset of
\begin{multline*}
\widetilde{T}^d:=\{ [F]\in(\FF_q[x_0,...,x_n]_l-\{0\})/\FF_q^*\mid
 V(F)_{\sing}\subset\P^n_{\overline{\FF_p}}\ \text{contains}\\ \text{an integral $b$-dimensional subscheme
(over $\overline{\FF_p}$) of degree}\ d\}.
\end{multline*}

So  our goal now is to prove that $\#\widetilde{T}^d=O(q^{\binom{l+n}{n}-a})$ as $q\to\infty$ (through powers of $q_0$).

As $F$ is chosen randomly from $\FF_q[x_0,...,x_n]_l,$ let $\Lambda$ be the event that $V(F)_{\sing}$
contains an {\it integral} $b$-dimensional subscheme of degree $d.$
Thus, our task is to prove that
$\Prob(\Lambda)q^{\binom{l+n}{n}}=O( q^{\binom{l+n}{n}-a+1}),$ or equivalently,
that $\Prob(\Lambda)=O(q^{-a+1})$ as $q\to\infty$ (through powers of $q_0$).

\subsection{Final preparations}

Consider the natural homogenization map $\sim\colon \FF_q[x_0,...,x_{n-1}]_{\leq l}\xto{\simeq}\FF_q[x_0,...,x_n]_l$
with respect to the variable $x_n.$
We have to be slightly careful because this is not the usual homogenization map (which takes a polynomial and
homogenizes it to the smallest possible degree); we think of $\sim$ as ``homogenization-to-degree-$l$" map. Recall that
$\tau=\lfloor \frac{l-1}{p}\rfloor.$

\begin{lemma} Let $Z\subset \P^n_{\overline{\FF_p}}$ be an integral closed subscheme not contained in the
hyperplane $V(x_n).$ Let
$F_0\in \FF_q[x_0,...,x_{n-1}]_{\leq l-1}$ be a fixed polynomial. Then, as $G$ is chosen randomly from
$\FF_q[x_0,...,x_{n-1}]_{\leq\tau},$ we have
\[\Prob(Z\subset V((F_0+G^p)^\sim)) \leq \Prob(Z\subset V(G^\sim)).\]
Here, the first $\sim$ is homogenization to degree $l-1,$ and the second one is homogenization
to degree $\tau$.
\label{dhmg}
\end{lemma}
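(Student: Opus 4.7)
The plan is to show that the ``bad'' set
\[W := \{G \in \FF_q[x_0,\ldots,x_{n-1}]_{\leq \tau} \mid Z \subset V((F_0+G^p)^\sim)\}\]
is either empty or a coset of the linear subspace
\[W' := \{G \in \FF_q[x_0,\ldots,x_{n-1}]_{\leq \tau} \mid Z \subset V(G^\sim)\}\]
inside the ambient $\FF_q$-vector space. Once this is known, $|W|\leq |W'|$ trivially, and dividing by $q^{\dim}$ gives the probability inequality. That $W'$ is an $\FF_q$-linear subspace is immediate, since the polynomials whose homogenization vanishes on $Z$ form the intersection of an ideal with $\FF_q[x_0,\ldots,x_{n-1}]_{\leq \tau}$.

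The core step is to prove that if $G_1,G_2\in W$, then $H:=G_1-G_2\in W'$. For this I would use additivity of the homogenization map together with Frobenius in characteristic $p$ to compute
\[(F_0+G_1^p)^\sim - (F_0+G_2^p)^\sim = (G_1^p-G_2^p)^\sim = (H^p)^\sim,\]
with all three homogenizations to degree $l-1$. A monomial-by-monomial check, using $\deg H \leq \tau$ so that $\deg H^p \leq p\tau \leq l-1$, then yields the key identity
\[(H^p)^\sim_{\,l-1} \;=\; x_n^{\,l-1-p\tau}\,(H^\sim_{\,\tau})^{p}.\]

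Since both $(F_0+G_i^p)^\sim$ vanish on $Z$, so does their difference, so $Z \subset V(x_n)\cup V(H^\sim_\tau)$. Now $Z$ is integral, hence irreducible, and $Z\not\subset V(x_n)$ by hypothesis, so $Z\subset V(H^\sim_\tau)$, i.e.\ $H\in W'$. This forces $W\subset G_0+W'$ for any $G_0\in W$, and the lemma follows.

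The only substantive technical point is the homogenization-Frobenius identity $(H^p)^\sim_{l-1} = x_n^{l-1-p\tau}(H^\sim_\tau)^p$, which is the mechanism transferring the Frobenius structure from the degree-$(l-1)$ homogenization into the degree-$\tau$ one. Everything else is formal once the integrality of $Z$ together with $Z\not\subset V(x_n)$ is used to split the union $V(x_n)\cup V(H^\sim_\tau)$.
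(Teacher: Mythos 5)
Your proof is correct and follows essentially the same route as the paper's: both arguments show that the set of bad $G$ is (contained in) a coset of $\{G\mid Z\subset V(G^\sim)\}$ by taking the difference of two solutions, using Frobenius to extract a $p$-th root, and using integrality of $Z$ together with $Z\nsubseteq V(x_n)$ to discard the extraneous $x_n$-factor. The only cosmetic difference is that you carry out the Frobenius step on the homogeneous side via the identity $(H^p)^\sim_{l-1}=x_n^{\,l-1-p\tau}(H^\sim_\tau)^p$, whereas the paper dehomogenizes to $D_+(x_n)$ first and works with the radical affine ideal of $Z\cap D_+(x_n)$.
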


\begin{proof}
Let $I\subset \overline{\FF_p}[x_0,...,x_{n-1}]$ be the (radical) ideal of
$Z\cap D_+(x_n)\subset D_+(x_n).$ We claim that for an inhomogeneous polynomial $H\in\FF_q[x_0,...,x_{n-1}]_{\leq l-1},$ we
have $Z\subset V(H^\sim)$ if and only if $H\in I.$ For this, first notice that $V(H^\sim)$
 is either $V(H)^-$ or $V(H)^-\cup V(x_n)$
(where $V(H)^-$ is the topological closure of $V(H)\subset D_+(x_n)$ in $\P^n_{\overline{\FF_p}}$),
depending on whether or not the degree of $H$ is equal to the degree of homogenization of the map $\sim$. Since $Z$ is irreducible
and not contained in $V(x_n),$ we have
$Z\subset V(H^\sim)$ if and only if $Z\subset V(H)^-$.
 In turn, since $Z\cap D_+(x_n)\neq\emptyset,$ this condition is
equivalent to $Z\cap D_+(x_n)\subset V(H),$ which is precisely the condition $H\in I.$

Therefore, $Z\subset V((F_0+G^p)^\sim)$ if and only if $F_0+G^p\in I.$
 If $F_0+G^p\in I$ and $F_0+G_1^p\in I,$ then
$(G-G_1)^p\in I,$ and hence $G':=G-G_1\in I$. So the number
of $G$ with $F_0+G^p\in I$ is either zero, or is equal to the number of elements $G'\in I$ with
$G'\in \FF_q[x_0,...,x_{n-1}]_{\leq\tau}.$ This is precisely the number of $G'\in\FF_q[x_0,...,x_{n-1}]_{\leq\tau}$ such that
$Z\subset V((G')^\sim).$
\end{proof}

\begin{corollary}
Keep the notation of Lemma \ref{dhmg}.
\begin{itemize}
\item[a)] If $\dim Z\geq b+1,$ then
\[\Prob(Z\subset V((F_0+G^p)^\sim))\leq q^{-\binom{\tau+b+1}{b+1}}.\]
\item[b)] If $\dim Z=b$ and $\deg Z=d\geq m,$ then
\[\Prob(Z\subset V((F_0+G^p)^\sim))\leq q^{-A_b(\tau,m')},\]
where $m'=\min(m,\tau+1).$
\end{itemize}
\label{dhcor}
\end{corollary}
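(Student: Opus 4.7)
The corollary is essentially a bookkeeping combination of Lemma \ref{dhmg} with Corollaries \ref{spSprob} and \ref{spdmprob}, and my plan is to execute it as such. First I would apply Lemma \ref{dhmg} to bound the left-hand probability $\Prob(Z\subset V((F_0+G^p)^\sim))$ by $\Prob(Z\subset V(G^\sim))$, where $G$ is now uniform in $\FF_q[x_0,\dots,x_{n-1}]_{\leq \tau}$ and the $\sim$ denotes homogenization to degree $\tau$. This step uses the hypothesis that $Z$ is integral and not contained in $V(x_n)$, which is already built into the statement (and into the setup of Lemma \ref{dhmg}).

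Next I would observe that the homogenization-to-degree-$\tau$ map $\sim\colon \FF_q[x_0,\dots,x_{n-1}]_{\leq\tau}\xto{\simeq}\FF_q[x_0,\dots,x_n]_\tau$ is an $\FF_q$-linear isomorphism, so $G^\sim$ is uniformly distributed on $\FF_q[x_0,\dots,x_n]_\tau$. The problem is therefore reduced to bounding $\Prob(Z\subset V(H))$ for $H$ chosen uniformly from $\FF_q[x_0,\dots,x_n]_\tau$, which is exactly the kind of quantity controlled by the corollaries of the previous subsection.

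For part (a), I would simply quote Corollary \ref{spSprob} with $l$ replaced by $\tau$ and with $Z$ in the role of $S$ (applicable since $\dim Z\geq b+1$ and $Z$ is integral), yielding the bound $q^{-\binom{\tau+b+1}{b+1}}$. For part (b), I would quote Corollary \ref{spdmprob} with $l$ replaced by $\tau$ and $m$ replaced by $m'=\min(m,\tau+1)$. The only subtlety — and the only thing resembling an obstacle — is that Corollary \ref{spdmprob} requires the degree bound $m\leq l+1$, which forces us to truncate $m$ to $m'\leq \tau+1$; once this truncation is done, the hypotheses $\deg Z=d\geq m\geq m'$ and $m'\leq \tau+1$ are both met, and Corollary \ref{spdmprob} delivers the bound $q^{-A_b(\tau,m')}$. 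There are no further technicalities.
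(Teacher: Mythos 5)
Your proposal is correct and matches the paper's proof, which is exactly the one-line combination of Lemma \ref{dhmg} with Corollaries \ref{spdmprob} and \ref{spSprob}; your added observations (that $G^\sim$ is uniform on $\FF_q[x_0,\dots,x_n]_\tau$ because homogenization is a linear isomorphism, and that $m$ must be truncated to $m'=\min(m,\tau+1)$ so the hypothesis of Corollary \ref{spdmprob} holds with $l$ replaced by $\tau$) are precisely the implicit details. Nothing is missing.
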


\begin{proof}
 Combine Lemma \ref{dhmg} with Corollaries \ref{spdmprob} and \ref{spSprob}.
\end{proof}

\subsection{The key step (large degree $d$)}

Fix a triple $(l,m,a)$ of positive integers. Recall that $\tau=\lfloor\frac{l-1}{p}\rfloor$ and $m'=\min(m,\tau+1).$ Let $d\geq m.$

As $F^\sim$ is chosen randomly from $\FF_q[x_0,...,x_n]_l$, or, equivalently, as $F$ is chosen randomly from
 $\FF_q[x_0,...,x_{n-1}]_{\leq l},$ let $E_n$ be the event that the following two conditions are satisfied:
\begin{itemize}
\item For each $i=0,...,n-b-1,$ the variety $V(\frac{\d F^\sim}{\d x_0},...,\frac{\d F^\sim}{\d x_i})$
has all irreducible components of dimension
$n-i-1,$ except possibly for components contained in the hyperplane $V(x_n).$
\item If $C\subset V(\frac{\partial F^\sim}{\partial x_0},...,\frac{\partial F^\sim}{\partial x_{n-b-1}})$ is a
$b$-dimensional integral closed subscheme of degree $d,$
then either $C\subset V(x_n)$, or
$C\nsubseteq V(\frac{\partial F^\sim}{\partial x_{n-1}})$.
\end{itemize}

We now proceed to bound $\Prob(E_n)$ from below (this is the hard part).

\begin{lemma}
\begin{equation}
\Prob(E_n)\geq
\left(\prod_{i=0}^{n-b-1}\left(
1-\frac{(l-1)^i}{q^{\binom{\tau+b+1}{b+1}}}
\right)\right)
\left(
1-\frac{(l-1)^{n-b}}{q^{A_{b}(\tau,m')}}
\right).
\label{bigpro}
\end{equation}
\label{probcount}
\end{lemma}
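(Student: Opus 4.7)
The plan is to parametrize $F \in \FF_q[x_0, \ldots, x_{n-1}]_{\leq l}$ in the style of Poonen's closed-point sieve: choose $F_0 \in \FF_q[x_0, \ldots, x_{n-1}]_{\leq l}$ and $G_0, \ldots, G_{n-1} \in \FF_q[x_0, \ldots, x_{n-1}]_{\leq \tau}$ independently and uniformly, and set
\[ F = F_0 + \sum_{i=0}^{n-1} G_i^p x_i. \]
Because shifting by $\sum_i G_i^p x_i$ preserves the uniform distribution of $F_0$, the resulting $F$ is itself uniformly distributed on $\FF_q[x_0, \ldots, x_{n-1}]_{\leq l}$, so it suffices to compute $\Prob(E_n)$ with respect to this $(n+1)$-tuple of independent variables. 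The point of the parameterization is the characteristic-$p$ identity
\[ \frac{\partial F^\sim}{\partial x_i} = \left(\frac{\partial F_0}{\partial x_i} + G_i^p\right)^\sim, \qquad i = 0, \ldots, n-1, \]
so $\partial F^\sim/\partial x_i$ depends only on $F_0$ and $G_i$.

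Next, I would reveal the components in the order dictated by $E_n$: first $F_0$, then $G_0, G_1, \ldots, G_{n-b-1}$, and finally $G_{n-1}$; the variables $G_{n-b}, \ldots, G_{n-2}$ do not enter $E_n$ and can be integrated out trivially. Let $A_i$ (for $0 \leq i \leq n-b-1$) be the event that $V(\partial F^\sim/\partial x_0, \ldots, \partial F^\sim/\partial x_i)$ has every irreducible component not contained in $V(x_n)$ of dimension $n - i - 1$, and let $B$ denote the second bullet in the definition of $E_n$, so that $E_n = A_0 \cap \cdots \cap A_{n-b-1} \cap B$. By the chain rule for conditional probabilities,
\[ \Prob(E_n) = \left(\prod_{i=0}^{n-b-1} \Prob(A_i \mid A_0 \cap \cdots \cap A_{i-1})\right) \cdot \Prob(B \mid A_0 \cap \cdots \cap A_{n-b-1}), \]
and it suffices to bound each factor from below by the corresponding factor in (\ref{bigpro}).

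For the factor $\Prob(A_i \mid A_0 \cap \cdots \cap A_{i-1})$, condition on an arbitrary outcome of $(F_0, G_0, \ldots, G_{i-1})$ for which the conditioning event holds. Then the irreducible components of $V(\partial F^\sim/\partial x_0, \ldots, \partial F^\sim/\partial x_{i-1})$ not lying in $V(x_n)$ form a finite list $Z_1, \ldots, Z_r$ of integral subschemes of dimension $n - i$, with $r \leq (l-1)^i$ by B\'ezout's theorem applied to $i$ hypersurfaces of degree $l-1$. The event that $A_i$ fails is that some $Z_j$ is contained in $V(\partial F^\sim/\partial x_i) = V((\partial F_0/\partial x_i + G_i^p)^\sim)$. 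Since $\dim Z_j = n - i \geq b + 1$ and $Z_j \not\subset V(x_n)$, Corollary \ref{dhcor}(a), applied with the fresh independent variable $G_i$, shows that this probability is at most $q^{-\binom{\tau+b+1}{b+1}}$, and the union bound yields $\Prob(A_i \mid \cdots) \geq 1 - (l-1)^i/q^{\binom{\tau+b+1}{b+1}}$. The analogous argument with Corollary \ref{dhcor}(b) handles the final factor: conditional on $A_0 \cap \cdots \cap A_{n-b-1}$, the intersection $V(\partial F^\sim/\partial x_0, \ldots, \partial F^\sim/\partial x_{n-b-1})$ has at most $(l-1)^{n-b}$ integral components of dimension $b$ outside $V(x_n)$, and for those of degree $d \geq m$ the probability of containment in $V((\partial F_0/\partial x_{n-1} + G_{n-1}^p)^\sim)$ is at most $q^{-A_b(\tau, m')}$.

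The main technical point to verify is the independence underlying the conditioning: at each step, the conditioning event is measurable with respect to the already-revealed variables, while the fresh variable $G_i$ (respectively $G_{n-1}$ at the final step) is independent of these, so Corollary \ref{dhcor} applies with a bound uniform over the conditioning. A minor caveat is that we use B\'ezout only to bound the number of top-dimensional irreducible components of a partial intersection by the expected total degree $(l-1)^i$, which is harmless since every such component has degree at least $1$.
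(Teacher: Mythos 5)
Your proposal is correct and follows essentially the same route as the paper: the same Poonen-style parametrization $F=F_0+\sum G_i^p x_i$, revealing $F_0,G_0,\dots,G_{n-b-1},G_{n-1}$ in turn, with B\'ezout bounding the number of components at each stage and Corollary \ref{dhcor}(a),(b) supplying the per-component probabilities via a union bound. Your explicit chain-rule decomposition into the events $A_i$ and $B$ just makes precise what the paper phrases as "suppose $F_0,G_0,\dots,G_{i-1}$ are fixed such that\dots"; there is no substantive difference.
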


\begin{proof}
We now mimic the main argument in \cite[Section 2.3]{P}. We will generate a random $F$ by choosing
$F_0\in \FF_q[x_0,...,x_{n-1}]_{\leq l}, G_i\in\FF_q[x_0,...,x_{n-1}]_{\leq\tau}$ randomly, in turn, and then setting
\begin{equation}
F:=F_0+G_0^px_0+\dots+G_{n-1}^p x_{n-1}.
\label{smart}
\end{equation}
For $F\in \FF_q[x_0,...,x_{n-1}]_{\leq l-1},$ the number of tuples $(F_0,G_0,...,G_{n-1})$ for which
(\ref{smart}) holds is independent of $F$.
We have
\[\frac{\d F}{\d x_i}=\frac{\d F_0}{\d x_i}+G_i^p.\] Moreover, the homogenization map $\sim$ commutes with differentiation,
so
\[\frac{\partial F^\sim}{\partial x_i}=\left(\frac{\partial F_0}{\partial x_i}+G_i^p\right)^\sim\]
(again, the two uses of $\sim$ here refer to homogenizations to different degrees, $l$ and $l-1$, respectively).

Let $i\in\{0,...,n-b-1\}.$ Suppose that $F_0,G_0,...,G_{i-1}$ are fixed such that
 $V(\frac{\d F^\sim}{\d x_0},...,\frac{\d F^\sim}{\d x_{i-1}})$ has only $(n-i)$-dimensional components,
 except possibly for components contained in the hyperplane $V(x_n).$
By B\'ezout's theorem (see p.~10 in \cite{Ful} for the version we are using here),
$V(\frac{\d F^\sim}{\d x_0},...,\frac{\d F^\sim}{\d x_{i-1}})$ has at most $(l-1)^i$ irreducible components.
Let $Z$ be one of them, and suppose that $Z\nsubseteq V(x_n).$ As $G_i$ is chosen randomly from $\FF_q[x_0,...,x_{n-1}]_{\leq\tau},$
we claim that
\[\Prob\left(Z\subset V\left(\frac{\d F^\sim}{\d x_i}\right)\right)\leq q^{-\binom{\tau+b+1}{b+1}}.\]
This follows from Corollary \ref{dhcor}a, since $\dim Z=n-i\geq b+1$.

For the final step,
conditioned on a choice of $F_0,G_0,...,G_{n-b-1}$
such that $V(\frac{\d F^\sim}{\d x_0},...,\frac{\d F^\sim}{\d x_{n-b-1}})$
has only $b$-dimensional components,
 except possibly for components contained in $V(x_n),$ we claim that
the probability, as $G_{n-1}\in\FF_q[x_0,...,x_{n-1}]_{\leq\tau}$,
that some $b$-dimensional component $C$ of $V(\frac{\d F^\sim}{\d x_0},...,\frac{\d F^\sim}{\d x_{n-b-1}})$
of degree $d$ and not contained in $V(x_n)$, is contained in
$V(\frac{\d F^\sim}{\d x_{n-1}})$, is at most $(l-1)^{n-b} q^{-A_{b}(\tau,m')}$.

Indeed,
the number of $b$-dimensional components $C$ of $V(\frac{\d F^\sim}{\d x_0},...,\frac{\d F^\sim}{\d x_{n-b-1}})$ of degree
$d$ is at most
$(l-1)^{n-b},$ by B\'ezout's theorem again (this is a bound on the total number of components of all dimensions).
If we fix a $b$-dimensional component $C$ of degree $d$ and not contained in $V(x_n)$, for
fixed $F_0,G_0,...,G_{n-b-1}$,
the probability (as $G_{n-1}$ is chosen randomly from $\FF_q[x_0,...,x_{n-1}]_{\leq\tau}$)
that $C\subset V\left((\frac{\d F_0}{\d x_{n-1}}+G_{n-1}^p)^\sim\right)$, is at most
$q^{-A_{b}(\tau,m')},$ by Corollary \ref{dhcor}b.
\end{proof}

\begin{proof}[Proof of Proposition \ref{lrg_m}]
By the hypothesis of Proposition \ref{lrg_m}, each of the exponents on the right hand side of (\ref{bigpro}) is greater than
$a-1.$
By virtue of the inequality $\prod (1-\e_i)\geq 1-\sum \e_i,$
Lemma \ref{probcount} implies that $\Prob(E_n)\geq 1-\frac{1}{q^{a-1}}$ for large $q$.
Therefore,
\[1-\Prob(E_n)=O\left(\frac{1}{q^{a-1}}\right)\quad\text{as}\ q\to \infty.\]

As $F\in\FF_q[x_0,...,x_{n-1}]_{\leq l},$
let $E_n'$ be the event that any integral $b$-dimensional closed subscheme $C\subset V(F)_{\sing}$ of degree
$d$ is contained in $V(x_n).$ Then $E_n$ implies $E_n'.$
For each $i=0,...,n-1,$ define $E_i,E_i'$ in analogy with $E_n,E_n'$,
except with dehomogenization with respect to the variable $x_i$
(and any ordering of the remaining variables).
The same conclusion $1-\Prob(E_i)=O(\frac{1}{q^{a-1}})$ holds for all $i=0,...,n.$ Note that
 $\Lambda$ (defined at the end of Section \ref{redffsubsect})
 implies $\bigcup_{i=0}^n \overline{E_i'}$, where  $\overline{E_i'}$ denotes the event opposite to $E_i'$.
 Indeed, $\bigcap V(x_i)=\emptyset$, so we cannot have $C\subset V(F)_{\sing}$ contained
 in all the coordinate hyperplanes.
 Therefore,
 \[\Prob(\Lambda)\leq\sum_{i=0}^n (1-\Prob(E_i'))\leq\sum_{i=0}^n (1-\Prob(E_i))=O\left(\frac{1}{q^{a-1}}\right)\
 \text{as}\ q\to\infty,\]
as desired.
\end{proof}

\section{Proof of the main theorem}

We now put together the main results Corollary \ref{sml_m} and Proposition \ref{lrg_m}
and finish the proof of Theorem \ref{mainthm}. Namely, Theorem \ref{mainthm}
follows immediately from our previous work when
$k=\overline{\FF_p},$ and we use upper-semicontinuity applied to $T^d\to\Spec\Z$ to prove the case $\ch k=0$
(Section \ref{eofpfsubsection}). However, there are technicalities (Corollary \ref{techforuniincharzero})
concerning the uniqueness of the largest component in
characteristic $0$, which we discuss in Section \ref{uniqlrgcompsubsection}. Finally,
we prove Theorem \ref{corscomp} in Section \ref{seclrgcompsubsectioneopf}.

\subsection{Restatement of the problem and the end of the proof}
\label{eofpfsubsection}

\begin{lemma}
Let $[F]\in \P(V)$ be such that $\dim V(F)_{\sing}\geq b.$ Then there is an integral $b$-dimensional closed subscheme
$C\into\P^n$ of degree at most
$l(l-1)^{n+1}$ such that $C\subset V(F)_{\sing}.$
\label{Bez}
\end{lemma}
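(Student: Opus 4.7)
The plan is to take a top-dimensional irreducible component of $V(F)_{\sing}$, cut it down to dimension exactly $b$ by generic hyperplane sections, and pass to the reduced structure; the required degree bound will then come from refined B\'ezout applied to the $n+2$ hypersurfaces cutting out $V(F)_{\sing}$.

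Concretely, since $\dim V(F)_{\sing}\geq b$ by hypothesis, there is an irreducible component $W$ of $V(F)_{\sing}$ with $e:=\dim W\geq b$. If $e=b$, I set $C:=W_{\red}$, which is integral of dimension $b$. Otherwise, I choose $e-b$ general hyperplanes $H_1,\ldots,H_{e-b}\subset\P^n$; since $W$ is irreducible, for generic $H_i$ the intersection $W\cap H_1\cap\cdots\cap H_{e-b}$ has pure dimension $b$, and I define $C$ to be the underlying reduced scheme of any one of its irreducible components. By construction $C\subset W\subset V(F)_{\sing}$, and $C$ is an integral $b$-dimensional closed subscheme of $\P^n$.

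For the degree bound, recall that $V(F)_{\sing}=V(F,\frac{\d F}{\d x_0},\ldots,\frac{\d F}{\d x_n})$ is the intersection of $n+2$ hypersurfaces of degrees $l,l-1,\ldots,l-1$. By the refined form of B\'ezout's theorem (Fulton, p.~10, which is already the version invoked in the proof of Proposition \ref{lrg_m}), every irreducible component of the intersection of hypersurfaces in $\P^n$ has degree at most the product of the degrees of those hypersurfaces, so $\deg W\leq l(l-1)^{n+1}$. Intersection with a generic hyperplane preserves total degree as a cycle, so each irreducible component of $W\cap H_1\cap\cdots\cap H_{e-b}$ has degree at most $\deg W$, and passing to the underlying reduced structure does not increase degree. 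Therefore $\deg C\leq l(l-1)^{n+1}$, as required.

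The only genuine input is the version of B\'ezout that bounds the degree of each individual (possibly excess-dimensional) irreducible component of an intersection of hypersurfaces by the product of the degrees; this is the same form already relied upon elsewhere in the paper, so no new ingredient is needed, and the remaining steps are routine generic-hyperplane-section arguments.
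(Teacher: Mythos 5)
Your proof is correct and follows essentially the same route as the paper: bound the degree of an irreducible component of $V(F)_{\sing}=V(F,\frac{\d F}{\d x_0},\dots,\frac{\d F}{\d x_n})$ by the product of the degrees of the defining hypersurfaces via the refined B\'ezout theorem of Fulton, then cut down to dimension $b$ with generic hyperplane sections without increasing the degree. The only cosmetic difference is that the paper states B\'ezout as a bound on the \emph{sum} of the degrees of all components (taking the product only over the nonzero partials, a detail worth keeping in mind when some $\d F/\d x_j$ vanishes identically), which immediately implies the per-component bound you use.
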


\begin{proof}
Let $Z_1,...,Z_s$ be the irreducible components of
$V(F)_{\sing}=V(F,\frac{\partial F}{\partial x_0},...,\frac{\partial F}{\partial x_n})$. Then by B\'ezout's theorem
(\cite{Ful}, p.~10),
\[\sum_{i=1}^s \deg(Z_i)\leq \deg(F)\prod_{\substack{ 0\leq j\leq n\\ \d F/\d x_j\neq 0}}
\deg\left(\frac{\partial F}{\partial x_j}\right)\leq l(l-1)^{n+1}.\]
But some component $Z_i$ has dimension at least $b$, so, intersecting with hyperplanes if necessary,
 this component will contain an integral $b$-dimensional closed subscheme of degree at most
$\deg(Z_i)\leq l(l-1)^{n+1}.$
\end{proof}

\begin{proof}[Proof of Theorem \ref{mainthm}]
By Lemma \ref{Bez}, $X$ is a finite union:
\begin{equation}
X=\bigcup_{d=1}^{l(l-1)^{n+1}}T^d_k.
\label{decforx}
\end{equation}
In particular, $X$ is a closed subset of $\P(V)$.
The statement of Theorem \ref{mainthm} is now equivalent to the following one:
for any $d\geq 2,$ we have $\dim (T^d_k-T^1_k)<\dim X^1$.
But $T^d_k-T^1_k=(T^d-T^1)_k,$ and if
$k_0\subset k$ is a subfield, then
$\dim (T^d-T^1)_k=\dim (T^d-T^1)_{k_0}.$ So it suffices to assume that $k=\overline{\FF_p}$ or $k=\overline{\Q}$.

First, suppose that $k=\overline{\FF_p}.$
Set
$B=p^b(n-b+1)$ in Lemma \ref{sml}, and set
$m=B+1$ in Proposition \ref{lrg_m}. Let $\tau(l)=\lfloor\frac{l-1}{p}\rfloor$.
By the definition in Lemma \ref{cont_C_lrg_deg} and the definition of $a_{n,b}(l)$ from the introduction, we have that
$A_b(\tau,m)$ grows as a polynomial in $l$ of degree $b$ and leading coefficient
$\frac{m}{p^b b!}>\frac{n-b+1}{b!},$ so
$A_b(\tau,m)>a_{n,b}(l)$ for sufficiently large $l$.
Also, $\binom{\tau(l)+b+1}{b+1}>a_{n,b}(l)$ for $l\gg 0$.
Thus, there is an effectively computable $l_0$ which satisfies Corollary \ref{sml_m} and such that
the hypothesis of Proposition \ref{lrg_m} is satisfied for the triple $(l,B+1,a_{n,b}(l)+1)$
whenever $l\geq l_0$.

We now prove by induction on $d\geq 2$ that for
any irreducible component $Z$ of $T^d_{k},$ either
$Z=X^1$ or $\dim Z<\dim X^1$. For $2\leq d\leq B$ this follows from Corollary \ref{sml_m}. Let $d\geq B+1$.
Assume that the statement holds
for all $2\leq d'\leq d-1.$ Then it also holds for $d$, by Proposition \ref{lrg_m}.

Now, let $k=\overline{\Q}.$ Let $p$ be any prime, and consider $l\geq l_0(n,b,p)$ as above.
By the previous paragraph,
for any $d\geq 2,$
$\dim T^d_{\FF_p}=\dim T^d_{\overline{\FF_p}}\leq \dim X^1.$
But, since $T^d\to\Spec\Z$ is projective, by the upper semicontinuity theorem, we know
\[\dim T^d_{\overline{\Q}}=\dim T^d_{\Q}\leq \dim T^d_{\FF_p}\leq \dim X^1.\]
Therefore, as long as $l\geq l_0(n,b,p)$ for some $p$ (take $p=2$ to obtain the best value of $l_0$ here),
we know that $X^1$ (over $\overline{\Q}$) is {\it an} irreducible component of
$X$ (over $\overline{\Q}$) of maximal dimension.

We now address the question of uniqueness of $X^1$ as a largest component.
In Section \ref{uniqlrgcompsubsection} we will show that it is possible to choose $p$ such that
$X^1\nsubseteq T^d_{\overline{\FF_p}}$ for any $d\geq 2.$
For such $p$, and for $d\geq 2,$ the conclusion from two paragraphs ago implies
\[\dim T^d_{\overline{\FF_p}}<\dim X^1.\]
So
\[\dim T^d_{\overline{\Q}}\leq\dim T^d_{\overline{\FF_p}}<\dim X^1.\]
By (\ref{decforx}), any irreducible component of $X$ is either $X^1$ or is contained in $T^d_k$ for some $d\geq 2.$
This completes the proof.
\end{proof}

\begin{remark}
We postpone for the next section the fact that over $\overline{\FF_p},$ we have $X^1\nsubseteq T^d_{\overline{\FF_p}},$
provided that $p\neq 2$ or $n-b$ is even.
So for $l\geq l_0(n,b,2),$ we know that $X^1$ is an irreducible component of $X$ of largest dimension; for $l\geq l_0(n,b,2)$
when $n-b$ is even, and for $l\geq l_0(n,b,3)$ when $n-b$ is odd,
we also know that $X^1$ is the unique largest-dimensional component of $X$.
\end{remark}

\begin{remark}
If we used the Eisenbud--Harris bounds for the dimension of the restricted Hilbert scheme, we would obtain a better bound for $l_0$ as follows. In the above proof, when $k=\overline{\FF_p},$
let $\tau(l)=\lfloor\frac{l-1}{p}\rfloor$ and $m(l)=\lceil\frac{l+1}{2}\rceil$.
There exists an effectively computable $l_0$ which satisfies Remark \ref{sml_better_bound} and such that
Proposition \ref{lrg_m} applies to the triple $(l,m(l),a_{n,b}(l)+1)$ whenever $l\geq l_0$.
Then for any $l\geq l_0(n,b,p),$ the statement of Theorem \ref{mainthm} holds.
\end{remark}

\subsection{Uniqueness of the largest component (in characteristic $0$)}
\label{uniqlrgcompsubsection}

We set the following notation for this section.
Consider a $b$-dimensional closed subscheme $C=V(f,x_{b+2},...,x_n)$ of $\P^n$,
where $f\in k[x_0,...,x_{b+1}]_d-\{0\},$
and set $W=(f,x_{b+2},...,x_n)^2_l$. In order to finish the proof of Theorem \ref{mainthm}, it will be sufficient to consider the
case when $C$ is a linear $b$-dimensional subspace in the next lemma; however, we will use the more general statement (when $d=2$)
in Section \ref{seclrgcompsubsectioneopf}.

\begin{lemma}
Assume $l\geq 2d+1.$
There is a dense open subset $U_1\subset\P(W)$ such that for all $[F]\in U_1,$ $V(F)_{\sing}=C$ (set-theoretically).
\label{ctrlsng}
\end{lemma}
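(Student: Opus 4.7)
Since $W = I^2_l$ for $I = (f, x_{b+2}, \ldots, x_n)$ the ideal of $C$, every $F \in W$ has $F$ and $\d F/\d x_i$ in $I$ for all $i$, so $C \subseteq V(F)_{\sing}$ is automatic. The task is the reverse inclusion, set-theoretically, for generic $[F]$. Form the incidence variety
$$\Gamma = \{([F], p) \in \P(W) \times (\P^n \setminus C) : F(p) = 0 \text{ and } \d F/\d x_i(p) = 0 \text{ for all } i\};$$
it is locally closed, and its image $B$ under the first projection is constructible by Chevalley. Taking $U_1 := \P(W) \setminus \overline{B}$ will give a dense open subset with the desired property, provided $\dim \Gamma < \dim \P(W)$.

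The plan is a fiber-dimension estimate over the second projection. Fix $p \in \P^n \setminus C$ and consider the linear map $\varphi_p : W \to k^{n+2}$, $F \mapsto (F(p), \d_0 F(p), \ldots, \d_n F(p))$. Euler's identity forces $\varphi_p(W)$ into the hyperplane $H_p = \{(c, c_0, \ldots, c_n) : lc = \sum_i p_i c_i\}$, of dimension $n+1$; the central claim is $\varphi_p(W) = H_p$. This yields $\codim_W \ker \varphi_p = n+1$, and hence $\dim \Gamma \leq n + (\dim \P(W) - n - 1) = \dim \P(W) - 1$, as required.

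To verify this surjectivity, I split $\P^n \setminus C$ in two. If $x_j(p) \neq 0$ for some $j \geq b + 2$, restrict to $\{x_j^2 M : M \in S_{l-2}\} \subseteq W$: the hypothesis $l - 2 \geq 1$ makes $M \mapsto (M(p), \d M(p))$ surject onto the Euler hyperplane in degree $l - 2$, which transports by a direct calculation into $\varphi_p(W) = H_p$. Otherwise $p \in V(x_{b+2}, \ldots, x_n)$, so $f(p) \neq 0$ since $p \notin C$; then use $\{f^2 A : A \in S_{l-2d}\} \subseteq W$, where $l \geq 2d + 1$ ensures $\deg A \geq 1$, and Euler's identity for $f$ in degree $d$ together with Euler's identity for $A$ in degree $l - 2d$ (using $p_k = 0$ for $k \geq b + 2$) assembles into Euler's identity for $F$ in degree $l$, again giving $\varphi_p(W) = H_p$. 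The main technical step is precisely this explicit surjectivity check; the only mildly delicate point is in positive characteristic when $\text{char}(k)$ divides one of $l$, $l - 2$, or $l - 2d$, so that the naive Euler hyperplane degenerates, but the image of the relevant evaluation-jet map remains a hyperplane of dimension $n+1$, so the codimension bound (and hence nonemptiness of $U_1$) is preserved.
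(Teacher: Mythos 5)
Your argument is correct. Note that the present paper gives no proof of this lemma (it is quoted from \cite{S1}), so there is nothing in-text to compare against; but your route --- the incidence correspondence over $\P^n\setminus C$, the Euler-relation upper bound $\varphi_p(W)\subseteq H_p$, and the matching lower bound obtained by restricting to $x_j^2\,S_{l-2}$ (when some $p_j\neq 0$, $j\geq b+2$) or $f^2\,S_{l-2d}$ (when $f(p)\neq 0$) and observing that $\varphi_p$ on these subspaces is the degree-$(l-2)$ resp.\ degree-$(l-2d)$ jet map followed by an invertible triangular substitution --- is the standard dimension count for statements of this type, and every step checks out, including the hypotheses $l-2\geq 1$ and $l-2d\geq 1$. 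The one place you only assert rather than prove, namely that the jet map $M\mapsto(M(p),\d_0 M(p),\dots,\d_n M(p))$ on $S_e$ has $(n+1)$-dimensional image for $e\geq 1$ even when $\ch k$ divides $e$ (or $l$), is indeed fine and needs no case distinction: choosing $i_0$ with $p_{i_0}\neq 0$, the jets of $x_{i_0}^e$ and $x_{i_0}^{e-1}x_k$ for $k\neq i_0$ form a matrix whose minor on the columns indexed by the value and the partials $\d_k$, $k\neq i_0$, is $p_{i_0}^{e}\cdot p_{i_0}^{(e-1)n}\neq 0$ in every characteristic, while the Euler relation $e\,c=\sum_i p_ic_i$ remains a nonzero linear form (it degenerates only in the $c$-coordinate), so the image is exactly a hyperplane of dimension $n+1$.
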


\begin{proof}
Consider the incidence correspondence
\[Y_1=\{([F],P)\in \P(W)\times (\P^n-C)\mid P\in V(F)_{\sing}\}\subset \P(W)\times (\P^n-C).\]
We are going to show that $\dim Y_1<\dim\P(W)$; this will imply that the closure $\overline{Y_1}$ of $Y_1$ in $\P(W)\times\P^n$
also has dimension smaller
than that of $\P(W),$ and thus the image of this closure under the projection to $\P(W)$ will be a proper closed subset of $\P(W).$
 Its complement $U_1$ will satisfy the condition of the lemma.

Consider the second projection $\tau\colon Y_1\to\P^n-C,$ and let $P\in\P^n-C.$
We claim the fiber $\tau^{-1}(P)$ is a projective linear subspace of $\P(W)$ of codimension $n+1$. This will imply that $Y_1$ is
irreducible, of dimension $\dim Y_1=\dim\P(W)-1.$

Suppose first that $P\in\cup_{i=b+2}^n D_+(x_i).$ Without loss of generality, assume that $P=[a_0,...,a_{n-1},1].$
Notice that $\tau^{-1}(P)$ is just
\[\P\left(((x_0-a_0 x_n,...,x_{n-1}-a_{n-1}x_n)^2\cap (f,x_{b+2},...,x_n)^2)_l\right)\subset\P(W),\]
and it is easy to show that
\[\dim \left(\frac{W}{(x_0-a_0 x_n,...,x_{n-1}-a_{n-1}x_n)^2\cap (f,x_{b+2},...,x_n)^2}\right)_l=n+1.\]

Suppose now that $P\in V(x_{b+2},...,x_n),$ without loss of generality $P=[1,a_1,...,a_{b+1},0,...,0].$
We have to prove that the following map
is an isomorphism:
\begin{align*}
&\left(\frac{(f,x_{b+2},...,x_n)^2}{(x_1-a_1 x_0,...,x_{b+1}-a_{b+1}x_0,x_{b+2},...,x_n)^2\cap(f,x_{b+2},...,x_n)^2}\right)_l\into\\
&\left(\frac{S}{(x_1-a_1 x_0,...,x_{b+1}-a_{b+1}x_0,x_{b+2},...,x_n)^2}\right)_l\simeq\\
&k[x_0]_l\oplus\left(\bigoplus_{i=1}^{b+1}k[x_0]_{l-1}(x_i-a_i x_0)\right)\oplus\left(\bigoplus_{i=b+2}^n k[x_0]_{l-1}x_i\right).
\end{align*}

Dehomogenize $f$ with respect to $x_0,$ consider a Taylor expansion at $(a_1,...,a_{b+1}),$ and homogenize to degree $l$ again,
so $f\equiv ax_0^d\pmod{(x_1-a_1x_0,...,x_{b+1}-a_{b+1}x_0)}$ with $a\neq 0$. So
$f^2\equiv a^2x_0^{2d}\pmod{(x_1-a_1x_0,...,x_{b+1}-a_{b+1}x_0)}.$
Now, the elements $f^2x_0^{l-2d-1}(x_i-a_ix_0)$ (for $i=1,...,b+1$), $f^2x_0^{l-2d-1}x_i$ (for $i=b+2,...,n$), and
$f^2x_0^{l-2d}$ map to a basis of the target.
\end{proof}

 We will use the lemma below only when $C$ is linear,
but we prove it here for a more general $C$ for the purposes of the later discussion in Remark \ref{techcompl}.

\begin{lemma}
Suppose that $l\geq 2d.$
If $\text{char } k\neq 2,$ then
there exists a dense open subset $U_2\subset\P(W)$ such that for all $[F]\in U_2,$ we have
\[\dim\{P\in C\mid\dim T_P V(F)_{\sing}\geq b+1\}\leq b-1.\]
If $\text{char }k=2$ and $C$ is a $b$-dimensional linear subspace,
and $n-b$ is even, then the same conclusion holds.
\label{ctrlsngred}
\end{lemma}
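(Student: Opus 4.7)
The plan is to recast the condition $\dim T_P V(F)_{\sing} \geq b+1$ as the vanishing of a determinant (or Pfaffian in characteristic $2$) of an $(n-b)\times(n-b)$ matrix built from the Hessian of $F$ transverse to $C$, and then to conclude by a standard incidence/dimension count on $W \times C$. Since $F \in I_C^2$, both $F$ and each $\partial F/\partial x_k$ vanish along $C$, so at a smooth point $P$ of $C \subset \P^n$ the tangent space $T_P V(F)_{\sing}$ is cut out in $T_P\P^n$ by the rows of the Hessian $\hat H(F)|_P$ (computed in an affine chart). A direct check shows $T_P C \subseteq \ker \hat H(F)|_P$, so $\hat H(F)|_P$ descends to a symmetric form on the normal space $N_P C$, represented by an $(n-b)\times(n-b)$ symmetric matrix $M_F(P)$; unwinding dimensions gives $\dim T_P V(F)_{\sing} = b + (n-b) - \mathrm{rank}\, M_F(P)$, so the condition becomes $\det M_F(P) = 0$.

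To exhibit this matrix I would decompose $F = f^2 A + f\sum_{i\geq b+2} B_i x_i + \sum_{b+2\leq i\leq j\leq n} C_{ij} x_i x_j$ and evaluate second partials at $P$. Picking a transverse direction within $\P^{b+1}$ (using $\nabla f(P)\neq 0$ at $P \in C_{\mathrm{sm}}$), one diagonal entry of $M_F(P)$ is proportional to $A(P)$, one row/column is determined by the $B_i(P)$, and the remaining block is the matrix of values $C_{ij}(P)$ (with doubled diagonal). In $\ch k \neq 2$, the hypothesis $l\geq 2d$ guarantees the graded pieces $S_{l-2d}, S_{l-d-1}, S_{l-2}$ are all nonzero, so the evaluation map $W \to \mathrm{Sym}^2(k^{n-b})$, $F\mapsto M_F(P)$, is surjective at each $P \in C_{\mathrm{sm}}$.

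To conclude, form the incidence $Z = \{(F,P) \in W \times C_{\mathrm{sm}} : \det M_F(P) = 0\}$. Since surjectivity forces each fiber of $Z \to C_{\mathrm{sm}}$ to have codimension $\geq 1$ in $W$, we have $\dim Z \leq \dim W + b - 1$; projecting to $W$ shows the generic fiber $\{P : \det M_F(P) = 0\}$ has dimension $\leq b-1$. Combined with $\dim(C\setminus C_{\mathrm{sm}}) \leq b-1$ (automatic when $C$ is linear, and controllable in the general case by possibly shrinking $U_2$), this gives the first statement.

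The $\ch k = 2$ case, where $C$ is a linear $b$-plane (so after a coordinate change $f = x_{b+1}$), is the main obstacle. The diagonal entries of $M_F(P)$, namely $2\hat C_{ii}(P)$, now vanish identically, so $M_F(P)$ is an alternating matrix of size $n-b$. If $n-b$ were odd then $M_F(P)$ would be singular at every $P$ and the lemma would fail; the parity hypothesis $n-b$ even is precisely what avoids this degeneration. When $n-b$ is even, the Pfaffian $\mathrm{Pf}(M_F)$ is defined, the map $F \mapsto \mathrm{Pf}(M_F(P))$ is surjective onto $k$ at each fixed $P$ by free choice of the off-diagonal $\hat C_{ij}(P)$, and $\det M_F = \mathrm{Pf}(M_F)^2$ vanishes on a codimension-one subset of $C \cong \P^b$ for generic $F$ by the same incidence argument as above.
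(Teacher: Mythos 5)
Your argument is correct, and at the top level it follows the same strategy as the paper: bound the dimension of the incidence correspondence in $\P(W)\times C$ and project to $\P(W)$. Where the two proofs differ is in how the non-degeneracy of the Hessian in the normal directions is established. The paper exhibits a single explicit witness: it takes $F=x_0^{l-2d}f^2+\sum_{i=b+2}^n x_0^{l-2}x_i^2$ (resp.\ $F=\sum_i x_{b+2i-1}x_{b+2i}x_0^{l-2}$ in characteristic $2$) and one smooth point $P$ at which the bottom-right $(n-b)\times(n-b)$ minor of the Jacobian is a nonzero diagonal (resp.\ block-diagonal) matrix; since $\P(W)\times C$ is irreducible, one such pair already forces the incidence set to be a proper closed subset, hence of dimension at most $\dim\P(W)+b-1$. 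You instead isolate the intrinsic object --- the symmetric form $M_F(P)$ that the Hessian induces on the normal space $N_PC$, using $F\in I_C^2$ to check that $T_PC$ lies in the kernel --- and prove that $F\mapsto M_F(P)$ surjects onto all symmetric (resp.\ alternating) $(n-b)\times(n-b)$ matrices at \emph{every} smooth point of $C$, so that every fiber of your incidence set over the smooth locus is proper. Your route costs a little more setup (the decomposition of $F$ and the rank formula $\dim T_PV(F)_{\sing}=n-\operatorname{rank}M_F(P)$), but it buys a conceptual explanation of the characteristic-$2$ parity condition: the induced form has zero diagonal, hence is alternating and automatically degenerate when $n-b$ is odd, while its Pfaffian is a nonzero linear-algebraic invariant when $n-b$ is even --- exactly the dichotomy the paper records in the remark following the lemma, and the paper's explicit $F$'s are precisely witnesses to your surjectivity claim. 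The only caveats, which your proof shares with the paper's, are the implicit assumption that $C$ has a smooth point (true in all applications, where $C$ is linear or integral) and the need to absorb $C\setminus C_{\mathrm{sm}}$ into the exceptional locus of dimension at most $b-1$, which you address correctly.
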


\begin{proof}
Consider the incidence correspondence
\[Y_2=\{([F],P)\in\P(W)\times C\mid\dim T_P V(F)_{\sing}\geq b+1\}\subset \P(W)\times C\]
(this is a closed subset). We will show that
$Y_2\neq \P(W)\times C,$ i.e.,
$\dim Y_2\leq\dim\P(W)+b-1$. Once this is done,
the map
$Y_2\to\P(W)$ will give a dense open $U_2\subset\P(W)$ such that the fiber over any $[F]\in U_2$ has
dimension at most $b-1$.

Suppose that $\ch k\neq 2.$
Fix a point $P=[p_0,...,p_{b+1},0,...,0]\in C$
with at least $2$ nonzero coordinates
such that $V(f)\subset \P^{b+1}=V(x_{b+2},...,x_n)$ is smooth at $P$.
Without loss of generality, $\frac{\d f}{\d x_{b+1}}(P)\neq 0$ and $p_0\neq 0.$
We claim that there exists $[F]\in \P(W)$ with $\dim T_P V(F)_{\sing}\leq b$.

For $[F]\in \P(W),$ we have $V(F)_{\sing}=V(F,\frac{\d F}{\d x_0},...,\frac{\d F}{\d x_n})$, so we have to look at the
Jacobian
\[J(P)=\begin{pmatrix} \frac{\d F}{\d x_0}(P) & \frac{\d F}{\d x_1}(P) & \dots &\frac{\d F}{\d x_n}(P)\\
\frac{\d^2 F}{\d x_0^2}(P) &\frac{\d^2 F}{\d x_0\d x_1}(P) & \dots & \frac{\d ^2 F}{\d x_0\d x_n}(P)\\
\vdots &\vdots &\ddots&\vdots\\
\frac{\d^2 F}{\d x_n\d x_0}(P) &\frac{\d^2 F}{\d x_n\d x_1}(P) & \dots & \frac{\d ^2 F}{\d x_n^2}(P)
\end{pmatrix}.\]
We know that $\dim T_P V(F)_{\sing}=n-\text{rk}J(P),$ so $\dim T_P V(F)_{\sing}\leq b$ if and only if $\text{rk}J(P)\geq n-b.$
In other words, we have to give some $[F]\in\P(W)$ such that some $(n-b)\times(n-b)$ minor of the Jacobian is nonzero.
Consider
\[F=x_0^{l-2d}f^2+\sum_{i=b+2}^n x_0^{l-2}x_i^2\]
and note that the bottom right $(n-b)\times (n-b)$ minor of $J(P)$ is nonzero.

Now suppose that $\text{char }k=2$ but $n-b$ is even and $C=V(x_{b+1},...,x_n)$. Let $P=[1,0,...,0]$. Consider
$F=\sum_{i=1}^{\frac{n-b}{2}} x_{b+2i-1}x_{b+2i}x_0^{l-2}.$
\end{proof}

\begin{remark}
This lemma fails when $\text{char }k=2$, $C$ is linear, and $n-b$ is odd.
\end{remark}

\begin{corollary}
Suppose that $\text{char }k\neq 2$ or $\text{char }k=2$ but $n-b$ is even.
 Then $X^1\nsubseteq T^d_{\overline{\FF_p}}$ for any $d\geq 2.$
\label{techforuniincharzero}
\end{corollary}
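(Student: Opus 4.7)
The plan is to produce, under each case of the hypothesis, a single hypersurface $[F] \in X^1$ lying outside $T^d_{\overline{\FF_p}}$ for every $d \geq 2$. I will take $C = V(x_{b+1},\ldots,x_n) \subset \P^n$, a $b$-dimensional linear subspace, and set $W = (x_{b+1},\ldots,x_n)^2_l$ (so the ``$d$'' of this subsection equals $1$ and $f = x_{b+1}$). Any $[F] \in \P(W)$ has $C \subset V(F)_{\sing}$ and thus lies in $X^1$, so it is enough to find $[F] \in \P(W)$ outside every $T^d_{\overline{\FF_p}}$ with $d \geq 2$.

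By Lemmas \ref{ctrlsng} and \ref{ctrlsngred} (the linear case, which is exactly where the hypothesis on $\ch k$ and the parity of $n-b$ is used), there are dense opens $U_1, U_2 \subset \P(W)$ such that for $[F] \in U_1 \cap U_2$: first, $V(F)_{\sing} = C$ set-theoretically; and second, the set of closed points $P \in C$ with $\dim T_P V(F)_{\sing} \geq b+1$ has dimension at most $b-1$. For such an $[F]$, at every closed point $P$ outside a proper closed subset of $C$ the embedding and Krull dimensions of $\O_{V(F)_{\sing},P}$ both equal $b$; hence $V(F)_{\sing}$ is regular, and in particular reduced, on a dense open of $C$. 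Consequently the local ring $\O_{V(F)_{\sing},\eta}$ at the generic point $\eta$ of $C$ is a field.

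I will conclude by contradiction. Suppose $[F] \in T^d_{\overline{\FF_p}}$ for some $d \geq 2$, and let $Y \subset V(F)_{\sing}$ be a closed subscheme with Hilbert polynomial $P_Y(z) = \tfrac{d}{b!}z^b + \cdots$. Then $\Supp Y \subset \Supp V(F)_{\sing} = C$, and since $C$ is the only irreducible closed subset of dimension $b$ there, decomposing the leading term of $P_Y$ gives $d = m \cdot \deg C = m$, where $m$ denotes the length of $\O_{Y,\eta}$ over itself. But the surjection $\O_{V(F)_{\sing},\eta} \twoheadrightarrow \O_{Y,\eta}$ from a field forces $m \leq 1$, contradicting $d \geq 2$. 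The main obstacle is conceptual rather than computational: one must translate the definition of $T^d$ (``contains a subscheme with a specified Hilbert polynomial'') into a statement about the local multiplicity of $V(F)_{\sing}$ at $\eta$, and then invoke the generic reducedness established above; all nontrivial geometric content has already been absorbed by the two preceding lemmas.
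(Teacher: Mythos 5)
Your proof is correct and follows essentially the same route as the paper: take $C$ linear, intersect the dense opens from Lemmas \ref{ctrlsng} and \ref{ctrlsngred}, and conclude that $V(F)_{\sing}$ is generically reduced along $C$ so its $b$-dimensional degree is $1$, which rules out containing any $b$-dimensional subscheme of degree $d\geq 2$. Your final step via the length of $\O_{Y,\eta}$ at the generic point is just a more explicit rendering of the paper's remark that the Hilbert polynomial of $V(F)_{\sing}$ has leading term $1/b!$.
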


\begin{proof}
Let $C=V(x_{b+1},...,x_n).$ Let $U_1$ and $U_2$ be as given by Lemmas \ref{ctrlsng} and \ref{ctrlsngred}. Let $U=U_1\cap U_2.$
So $U$ is a dense open subset of $P(W)$ such that for all $[F]\in U,$ $V(F)_{\sing}=L$ set-theoretically,
and in addition, the closed embedding
$L\into V(F)_{\sing}$ is an isomorphism over the complement of a closed subset of smaller dimension. Thus the Hilbert polynomial
of $V(F)_{\sing}$ has degree $b$ and leading term
$1/b!$, so $V(F)_{\sing}$ does not contain any closed subscheme of dimension $b$ and degree $d\geq 2.$
In other words, $[F]\in X^1-T^d_{\overline{\FF_p}}.$
\end{proof}

Similarly, we can apply Lemmas \ref{ctrlsng} and \ref{ctrlsngred} to an integral $C=V(f,x_{b+2},...,x_n)$ of degree $2$
and obtain the following

\begin{corollary} Suppose that $\ch k\neq 2.$
There exists $[F]\in\P(V)$ such that $V(F)_{\sing}$ is a $b$-dimensional integral closed subscheme of degree $2$
(as a set), and such that
$V(F)_{\sing}$ does not contain any $b$-dimensional closed subscheme of degree $d\geq 3.$
\label{ingforuniqseccomp}
\end{corollary}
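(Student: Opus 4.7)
The plan is to mirror the proof of Corollary \ref{techforuniincharzero}, but starting with a quadric $C$ instead of a linear space. Specifically, choose $f \in k[x_0, \ldots, x_{b+1}]_2$ to be a non-degenerate quadratic form, so that $C = V(f, x_{b+2}, \ldots, x_n) \subset \P^n$ is an integral $b$-dimensional closed subscheme of degree $2$ (the hypersurface $V(f) \subset \P^{b+1}$ is a smooth quadric, which in particular has a smooth point with at least two nonzero coordinates, meeting the setup of the proof of Lemma \ref{ctrlsngred}). Set $W = (f, x_{b+2}, \ldots, x_n)^2_l$. Implicit in the statement is that $l$ is large enough ($l \geq 2d+1 = 5$) for Lemmas \ref{ctrlsng} and \ref{ctrlsngred} to apply; the latter is where we use $\ch k \neq 2$.

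Next, let $U_1 \subset \P(W)$ and $U_2 \subset \P(W)$ be the dense open subsets provided by Lemmas \ref{ctrlsng} and \ref{ctrlsngred} respectively, and pick any $[F] \in U_1 \cap U_2$. Lemma \ref{ctrlsng} gives $V(F)_{\sing} = C$ set-theoretically; Lemma \ref{ctrlsngred} gives that the locus where $\dim T_P V(F)_{\sing} \geq b+1$ has dimension at most $b-1$, hence does not contain the generic point of $C$. Since $V(F)_{\sing}$ contains $C$ and has the same underlying topological space, at the generic point $\eta$ of $C$ the tangent space dimension is exactly $b$, so $V(F)_{\sing}$ is regular, and in particular reduced, at $\eta$.

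From here, the translation to Hilbert polynomials is routine. The scheme $V(F)_{\sing}$ has the same underlying set as $C$, so its only top-dimensional component is $C_{\red}$ with some multiplicity $m$, and the leading term of its Hilbert polynomial is $(m \cdot 2/b!)\,z^b$. The reducedness at $\eta$ forces $m=1$, so $P_{V(F)_{\sing}}(z) = (2/b!)\,z^b + \dots$. If $D \subset V(F)_{\sing}$ is a $b$-dimensional closed subscheme of degree $d \geq 3$, then the surjection $\O_{V(F)_{\sing}} \onto \O_D$ gives $P_D(z) \leq P_{V(F)_{\sing}}(z)$ for $z \gg 0$, which contradicts $d/b! > 2/b!$.

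The main technical point — and the only genuine obstacle — is justifying that the generic multiplicity $m$ of $V(F)_{\sing}$ along $C$ equals $1$, i.e., that Lemma \ref{ctrlsngred} really rules out non-reduced scheme structure along the top-dimensional part (as opposed to just controlling the tangent space set-theoretically). This follows because regularity at a point on an irreducible closed subscheme implies the local ring is a domain of the expected dimension, hence the scheme is generically reduced along that component. Once this is in hand, the degree comparison is automatic.
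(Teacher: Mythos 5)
Your proposal is correct and is exactly the paper's argument: the paper proves this corollary by applying Lemmas \ref{ctrlsng} and \ref{ctrlsngred} to an integral degree-$2$ subscheme $C=V(f,x_{b+2},\dots,x_n)$ and running the same Hilbert-polynomial comparison as in Corollary \ref{techforuniincharzero}. Your added justification of generic reducedness via regularity at the generic point is a correct fleshing-out of the step the paper phrases as ``$C\into V(F)_{\sing}$ is an isomorphism away from a smaller-dimensional closed subset.''
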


\subsection{The second largest component}
\label{seclrgcompsubsectioneopf}

In contrast to the treatment of the largest component of $X$,
the existence of a component of the expected second-largest dimension is a little more subtle,
so there will be an extra twist in the argument. In this section, we prove Theorem \ref{corscomp}.

For now, $k$ is again any algebraically closed field.

Fix $n,b$ as usual, and let $d\geq 1$. Define
\begin{align*}\beta_d(l)
&=\binom{l+b+1}{b+1}-\binom{l-2d+b+1}{b+1}+(n-b-1)\left(\binom{l+b}{b+1}-\binom{l-d+b}{b+1}\right)\\
&=\frac{(n-b+1)d}{b!}l^b+\dots
\end{align*}
and recall the definition of $\gamma_2(l)$ from Section \ref{seccompsmldegsection}.

Let $I=(f,x_{b+2},...,x_n)\subset S=k[x_0,...,x_n],$ where $f\in k[x_0,...,x_{b+1}]_d-\{0\}$.
Consider the composition
\[\Phi\colon  k[x_0,...,x_{b+1}]_l\oplus\left(\bigoplus_{i=b+2}^n k[x_0,...,x_{b+1}]_{l-1}x_i\right)
\into S_l\onto S_l/(I^2\cap S_l).\]
Note that $\Phi$ is surjective.

\begin{lemma}
 We have that
\[\ker(\Phi)=\{P+\sum_{i=b+2}^n P_i x_i\ \colon\ f^2|P, f|P_i\ \text{for}\ i=b+2,...,n\}.\]
For $l\geq 2d$, the codimension of $I^2_l$ in $S_l$ equals $\beta_d(l)$.
\label{explct}
\end{lemma}

\begin{proof} If $P+\sum P_i x_i\in\ker(\Phi),$ then we can write $P+\sum P_i x_i=T\in I^2.$ Expand both sides
as polynomials in $x_{b+2},...,x_n$ and just compare the two expressions.
The second part is an immediate consequence.
\end{proof}

\begin{lemma} Let $C\into\P^n$ be any integral $b$-dimensional closed subscheme of degree $2$,
with (saturated) ideal $I$. If $F\in k[x_0,...,x_n]_l$ satisfies $C\subset V(F)_{\sing},$ then $F\in I^2_l.$
\label{sq_of_ideal}
\end{lemma}

\begin{proof}
We can assume
that $C=V(I),$ with $I=(f,x_{b+2},...,x_n),$ where $f\in k[x_0,...,x_{b+1}]_2-\{0\}$ is irreducible.
We claim that the ideal $I^2$ is saturated. Indeed, let $F\in S$ be homogeneous, and suppose that $x_j^M F\in I^2$ for all
$j=0,...,n$ (and for some $M$).
Write $F=P+\sum_{i=b+2}^n P_i x_i+T,$ where $P,P_i\in k[x_0,...,x_{b+1}]$ are homogeneous of the appropriate degrees, and
$T\in (x_{b+2},...,x_n)^2.$ Since $x_0^M F\in I^2,$ Lemma \ref{explct} implies that $f^2|x_0^M P$ and $f| x_0^M P_i$ for
each $i=b+2,...,n.$ Since $f$ and $x_0$ are relatively prime, it follows that $f^2|P$ and $f|P_i$ for each $i$, and hence
$F\in I^2.$

Since $C$ is a local complete intersection and the ideal $I^2$ is saturated,
the conclusion now follows from Corollary 2.3 in \cite{S3}.
\end{proof}

Let $P=\binom{z+b+1}{b+1}-\binom{z-1+b}{b+1}$ (this is the Hilbert polynomial of a degree-$2$ hypersurface in $\P^{b+1}$).
Recall that $\widetilde{\Hilb}^{P}$ denotes the closure in $\Hilb^{P}$ of the set of integral $b$-dimensional closed subschemes
of degree 2; in this case, a point
in $\widetilde{\Hilb}^{P}$ is, up to a change of coordinates, a closed subscheme of the form
$V(f,x_{b+2},...,x_n)\subset\P^n,$ where
$f\in k[x_0,..,x_{b+1}]_2-\{0\}$ (not necessarily irreducible of course).
 Note that
 \begin{align}\dim \widetilde{\Hilb}^{P} &=\dim \Gr(b+1,n)+\dim \P(k[x_0,...,x_{b+1}]_2)\notag\\
 &=(b+2)n-\frac{b(b+1)}{2}.
 \label{dimbndl}
 \end{align}

By Lemma \ref{explct}, if $f\in k[x_0,...,x_{b+1}]_2-\{0\},$ then
\begin{equation}
\dim \P\left((f,x_{b+2},...,x_n)^2_l\right)=\binom{l+n}{n}-\beta_2(l)-1.
\label{dimisqr}
\end{equation}

Recall the definition of
$\widetilde{\Omega}^P\subset V(F)_{\sing}\}\subset \widetilde{\Hilb}^{P}\times\P(V)$ and
the projections $\pi$ and $\rho$ to $\widetilde{\Hilb}^{P}$ and $\P(V),$ respectively.
For $C\subset\P^n$ a closed subscheme, let $I_C$ denote its
(saturated) ideal.
Consider the subset
\[Z'=\{(C,[F])\in \widetilde{\Hilb}^{P}\times\P(V)\mid F\in I_C^2\}\subset\widetilde{\Omega}^P.\]

\begin{lemma}
The subset $Z'$ of $\widetilde{\Omega}^P$ is irreducible.
\end{lemma}

\begin{proof}
By Lemma \ref{explct},
for a fixed $f\in k[x_0,...,x_{b+1}]_2-\{0\}$ and
given $F=F_0+\sum_{i=b+2}^n F_i x_i +T\in k[x_0,...,x_n]_l,$ where
$F_0\in k[x_0,...,x_{b+1}]_l, F_i\in k[x_0,...,x_{b+1}]_{l-1}$, and $T\in (x_{b+2},...,x_n)^2_l,$ we have that
$F\in (f,x_{b+2},...,x_n)^2_l$ if and only if $f^2|F_0$ and $f|F_i$ for each $i=b+2,...,n.$

Let $V'=k[x_0,...,x_{b+1}]_{l-4}\oplus \left(\bigoplus_{i=b+2}^n k[x_0,...,x_{b+1}]_{l-3}\right)\oplus (x_{b+2},...,x_n)^2_l$.
Denote by $\A(k[x_0,...,x_{b+1}]_2)$ the affine space parametrizing points in $k[x_0,...,x_{b+1}]_2$.
Consider the composition
\[\xymatrix{
\text{Aut}(\P^n)\times\left(\A(k[x_0,...,x_{b+1}]_2)-\{0\}\right)\times\P(V')\ar[d]\\
\text{Aut}(\P^n)\times\P(k[x_0,...,x_{b+1}]_2)\times \P(V)\ar[d]\\
\widetilde{\Hilb}^{P}\times\P(V)
}
\]
where the first map is given by
\[(\sigma,f,[Q,R_{b+2},...,R_n,T])\longmapsto (\sigma, [f], [f^2Q+\sum_{i=b+2}^n fR_ix_i+T])\]
and the second map is given by
\[(\sigma, [f],[F])\longmapsto (V(f^\sigma,x_{b+2}^\sigma,...,x_n^\sigma),[F]^\sigma).\]
By construction, $Z'$ is precisely the image of the composition, hence is irreducible.
\end{proof}

\begin{remark}
It is not true that the fibers of $\widetilde{\Omega}^P\xto{\pi}\widetilde{\Hilb}^P$ are all of the same dimension.
For example, let $b=1, n=3,$ and look at $C=V(x_2^2,x_3)\in\widetilde{\Hilb}^P.$
Let $F=x_2^3x_0^{l-3}.$
Then $(C,[F])\in\pi^{-1}(C),$
but $F\notin (x_2^2,x_3)^2.$ This is why we have to study the auxiliary $Z'.$
\end{remark}

Let $Z$ be the closure of $Z'$ in $\widetilde{\Omega}^P.$

\begin{lemma} We have that
\[\dim Z=\binom{l+n}{n}-\gamma_2(l).\]
\end{lemma}

\begin{proof}
 First, $\pi(Z')=\widetilde{\Hilb}^{P},$ since given any
$C\in\widetilde{\Hilb}^{P},$ the ideal $I_C^2$ contains forms of degree $4$ already, so we can certainly find $F\in (I_C^2)_l.$
Thus, $\pi\colon Z\onto\widetilde{\Hilb}^{P}$ is onto. A generic $C\in \widetilde{\Hilb}^{P}$ is an integral
$b$-dimensional closed subscheme of degree $2$;
for such a $C$, by Lemma \ref{sq_of_ideal}, we know $Z'_C=\widetilde{\Omega}^P_C$ and hence also $Z_C=Z'_C$. This allows us to
compute $\dim Z_C=\dim Z'_C=\binom{l+n}{n}-\beta_2(l)-1.$ This computes $\dim Z=\dim\widetilde{\Hilb}^{P}+\dim Z_C$
and gives the desired result, by virtue of (\ref{dimbndl}) and (\ref{dimisqr}).
\end{proof}

\begin{lemma}
$X^2:=\rho(Z)$ is an irreducible closed subset of $X$ of dimension $\binom{l+n}{n}-\gamma_2(l)$.
If $[F]\in X$ contains an integral closed subscheme of dimension $b$ and degree 2
in its singular locus, then $[F]\in X^2.$
\end{lemma}

\begin{proof}
It is clear that $\rho(Z)$ is an irreducible closed subset of $X$, since $Z$ is irreducible and closed in
$\widetilde{\Omega}^P$. Choose any integral $b$-dimensional $C$ of degree $2$. Apply Lemma \ref{ctrlsng} to $C$ to find
$[F]\in \P(V)$ such that
we have a homeomorphism $C\into V(F)_{\sing}.$ If $\hat{C}\in\widetilde{\Hilb}^P$ is another closed subscheme contained in
$V(F)_{\sing},$ then necessarily we have $C\into\hat{C},$ since $C$ is reduced. Hence
$C=\hat{C},$ since $C$ and $\hat{C}$ have the same Hilbert
polynomial. Therefore, the map
$Z\to\rho(Z)$ has a $0$-dimensional
fiber, so $\dim\rho(Z)=\dim Z$.

Let $[F]\in X$ be such that $V(F)_{\sing}$ contains an
integral $b$-dimensional closed subscheme $C$ of $\P^n$
of degree $2$.  Then we know that $F\in I_C^2$ by Lemma \ref{sq_of_ideal},
so $(C,[F])\in Z',$ and hence in fact $[F]\in \rho(Z')\subset\rho(Z)=X^2.$
\end{proof}

\begin{remark}
Lemma \ref{scompsmldeg} did not treat the case $b=n-1, d=3.$ We discuss this now. When $b=n-1,$ we can describe $X$ explicitly.
Indeed, if $V(G)$ is an integral $(n-1)$-dimensional closed subscheme of $\P^n_k$
(here $k$ has any characteristic) with
$V(G)\subset V(F)_{\sing},$ then necessarily $F=G^2H$ for some $H$ (since $V(G)$ is a complete intersection and the ideal
$(G^2)$ is saturated; see Corollary 2.3 in \cite{S3}). For $d=1,...,\lfloor\frac{l}{2}\rfloor,$ consider the map
\begin{align*}
\varphi_{d}\colon \P(k[x_0,...,x_n]_d)\times\P(k[x_0,...,x_n]_{l-2d}) &\longrightarrow \P(k[x_0,...,x_n]_l)\\
(G,H) &\longmapsto G^2H.
\end{align*}
Certainly, $\text{im}(\varphi_d)\subset T^d_k\subset X$ and $X=\bigcup_{d=1}^{\lfloor\frac{l}{2}\rfloor}\text{im}(\varphi_d),$ so
\[X=X^1\cup\text{im}(\varphi_2)\cup\text{im}(\varphi_3)   \cup\left(\bigcup_{d=4}^{\lfloor\frac{l}{2}\rfloor}T^d\right).\]
Since any point in the image of $\varphi_d$ has only finitely many preimages, it follows that
\[\dim\text{im}(\varphi_d)=\binom{d+n}{n}+\binom{l-2d+n}{n}-2.\]
So $\dim\text{im}(\varphi_3)<\dim\text{im}(\varphi_2)=\dim X^2$ for $l\geq l_0$ (where $l_0$ is effectively computable) and hence when
$b=n-1,$ it suffices to bound $\dim T^d_k$ only for $d\geq 4,$ which was handled by Lemma \ref{scompsmldeg}.
\label{techdetscomprem}
\end{remark}

\begin{proof}[Proof of Theorem \ref{corscomp}]
Let $k=\overline{\FF_p}.$
With the above preparations, the proof is now analogous to that of Theorem \ref{mainthm}.
We use Lemma \ref{scompsmldeg} with $B=2p^b(n-b+1)$ (or Remark \ref{techdetscomprem} if $b=n-1$) and Proposition \ref{lrg_m} applied to the triple
$(l,B+1,\gamma_2(l)+1)$ to argue that
if $Z\subset T^d_k$ is an irreducible component of $T^d_k$ (where $d\geq 3$), then either $Z\subset T^1_k\cup T^2_k,$ or $\dim Z<\dim X^2$
(as long as $l\geq l_0,$ for some effectively computable $l_0$).

We have
\[X=\bigcup_{d=1}^N T^d_k\qquad\text{for $N=l(l-1)^{N+1}$.}.\]
If $Z$ is an irreducible component of $X$ with $\dim Z\geq \dim X^2,$ then $Z\subset T^d_k$ for some $d.$ If $d\geq 3,$ then by the
previous paragraph, we have $Z\subset T^1\cup T^2.$ So in any case, $Z\subset T^1\cup T^2=X^1\cup X^2.$ Hence $Z=X^1$ or $Z=X^2$.
\end{proof}

\begin{remark}
Let $p\neq 2.$ If we could prove that $\dim T^d_{\overline{\FF_p}}<\dim X^2$ for all $d\geq 3,$ we would be able to deduce that
for $d\geq 3,$
\[\dim T^d_{\overline{\Q}}\leq\dim T^d_{\overline{\FF_p}}<\dim X^2.\]

Suppose instead that $\dim T^d_k\geq\dim X^2$ for some $d\geq 3$ and $k=\overline{\FF_p}.$ Let $Z$ be an irreducible component
of $T^d_k$ with $\dim Z\geq \dim X^2$.
We have $Z\subset X^1\cup X^2$ by the proof of Corollary \ref{corscomp}. Moreover,
$Z\nsubseteq X^2$ (since $X^2\nsubseteq T^d_k$ by Corollary \ref{ingforuniqseccomp}), so $Z\subset X^1.$ So it would suffice to
prove that $\dim (T^d_k\cap X^1)<\dim X^2$ for $d\geq 3$ (this inequality fails when $d=2$).
This is the technical
problem that unfortunately does not allow us to remove the assumption $\ch k\neq 0$ from Theorem \ref{corscomp}.

\label{techcompl}
\end{remark}

\section*{Acknowledgments}

The current paper consists of my doctoral thesis at MIT, and I would like to thank first and foremost my advisor Bjorn Poonen for
his invaluable help. He guided me with dedication throughout the entire process, starting with the formulation of my thesis problem, going through numerous suggestions for ideas and approaches, and finishing with feedback for editing the final write-up.
In particular, the main idea used in the case of large degree, which is the heart of the current paper, belongs to him, and I deeply admire Prof. Poonen's academic generosity.  I would also like to thank Joe Harris
for the fruitful discussions, and particularly for teaching me the specialization argument, which is one of the two key arguments in the paper. I am grateful to Dennis Gaitsgory for his inspiring algebraic geometry lectures that helped me for the technical part of the paper. I also thank the referee for a number of valuable suggestions.

\bibliography{main}
\bibliographystyle{plain}

\end{document}